\theoremstyle{plain}
\newtheorem{lem}{Lemma}[section]
\newtheorem{thm}[lem]{Theorem}
\newtheorem{prop}[lem]{Proposition}
\newtheorem{coro}{Corollary}[lem]
\theoremstyle{definition}
\newtheorem{eg}{Example}
\theoremstyle{remark}
\newtheorem{rmk}[lem]{Remark}
\begin{document}

\title{The orbit of a $\beta$-transformation cannot lie in a small interval}

\author{DoYong\ Kwon}
\date{}
\maketitle

\begin{abstract}
For $\beta>1$, let $T_\beta:[0,1]\rightarrow [0,1)$ be the $\beta$-transformation. We consider an invariant $T_\beta$-orbit closure contained in a closed interval with diameter $1/\beta$, then define a function $\Xi(\alpha,\beta)$ by the supremum of such $T_\beta$-orbit with frequency $\alpha$ in base $\beta$, i.e., the maximum value in the $T_\beta$-orbit closure. This paper effectively determines the maximal domain of $\Xi$, and explicitly specifies all possible minimal intervals containing $T_\beta$-orbits.
\end{abstract}

\indent 2010 \textit{Mathematics Subject Classification:} 11A63, 37B10, 68R15.\\
\indent \textit{Keywords:} $\beta$-expansion, $\beta$-transformation, Sturmian word, Christoffel word. \newline

%
%
%
%       I  N  T  R  O  D  U  C  T  I  O  N
%
%
%
\section{Introduction}
In \cite{Ma}, Mahler considered hypothetical real number $\xi>0$, called a \textit{Z-number}, for which
$$0\leq \left\{ \xi\left(\frac{3}{2} \right)^n \right\}<\frac{1}{2}\ \ \mathrm{for\ every\ integer\ } n\geq0,$$
where $\{\cdot\}$ denotes the fractional part. This was motivated by a well-known connection between Waring's problem and the distribution of the fractional parts
$\left\{\left(\frac{3}{2} \right)^n \right\}$. See \cite{Fl}. Mahler proved that the set of Z-numbers is at most countable, though it is believed to be empty. Since then, Z-numbers have led us to study a more general problem \cite{FLP}. Given a number $\beta>1$ and an interval $[s,t] \subset [0,1]$, is there a real number $\xi>0$ for which $s\leq\{\xi \beta^n\}\leq t$ for every integer $n\geq0$? If so, what is the minimal diameter of $[s,t]$? Bugeaud and Dubickas gave a complete answer to the questions when $\beta\geq2$ is an integer \cite{BD}.

For $\beta>1$, the $\beta$-\textit{transformation} $T_\beta:[0,1]\rightarrow [0,1)$
is a map defined by
$$T_\beta(x)=\beta x \mod 1.$$
Let $\lfloor\cdot\rfloor$ and $\lceil\cdot\rceil$ be the usual floor and ceiling functions respectively. Symbolic dynamics of the $\beta$-transformations provides us with $\beta$-expansions \cite{Re,Pa,Bl}. For each $x\in[0,1]$, the $\beta$-\textit{expansion} of $x$ is a sequence of integers determined by the next iterated procedure:
$$d_\beta (x):=(x_i)_{i\geq1},\ \mathrm{where\ } x_i = \lfloor \beta T_\beta^{i-1} (x)\rfloor.$$
Note that $d_\beta (x)\in A_\beta^\mathbb{N}$ with $A_\beta:=\{0,1,\ldots,\lceil\beta\rceil-1\}$, and that the usual order in real numbers is compatible with  lexicographic order in $A_\beta^\mathbb{N}$. In other words,
$0\leq x<y\leq1$ if and only if $d_\beta (x)<d_\beta (y)$ lexicographically.
In studying the dynamics of the $\beta$-transformations, the lexicographic order via the $\beta$-expansions will play a crucial role in the whole paper. We say that a $\beta$-expansion of $x$ is \textit{finite} if it is of the form $d_\beta(x)=u0^\omega$, which is often written as $d_\beta(x)=u$. For $\beta>1$, we introduce a function $(\cdot)_\beta$, which sends each $a_1 a_2\cdots\in A_\beta^\mathbb{N}$ to a real number $\sum_{i=1}^\infty a_i/\beta^i$. In particular, we have $(d_\beta(x))_\beta=x$. If $u$ is a finite word, then $(u)_\beta$ is defined to be $(u0^\omega)_\beta$. For a finite word $v$, we write $v^\omega$ for $vv\cdots$.

Let $\alpha\geq0$ and $0\leq\rho\leq1$ be real. Then two functions
$s_{\alpha,\rho}, s'_{\alpha,\rho}:\mathbb{N}\rightarrow\mathbb{N}$ defined by
$$s_{\alpha,\rho}(n) := \lfloor \alpha(n+1) + \rho \rfloor -
\lfloor \alpha n +\rho \rfloor,$$
$$s'_{\alpha,\rho}(n) := \lceil
\alpha(n+1) + \rho \rceil - \lceil \alpha n +\rho \rceil,$$
yields infinite words $s_{\alpha,\rho}:=s_{\alpha,\rho}(0)s_{\alpha,\rho}(1)\cdots$ and $s'_{\alpha,\rho}:=s'_{\alpha,\rho}(0)s'_{\alpha,\rho}(1)\cdots$.
The word $s_{\alpha,\rho}$ (resp. $s'_{\alpha,\rho}$) is called a
\textit{lower} (resp. \textit{upper}) \textit{mechanical word} with \textit{slope} $\alpha$ and
\textit{intercept} $\rho$. One readily sees that $s_{\alpha,\rho}$ and $s'_{\alpha,\rho}$ are binary or unary words. Actually,
$$\mathrm{alph}(s_{\alpha,\rho}) =\mathrm{alph}(s'_{\alpha,\rho})=
\{\lceil\alpha\rceil-1, \lceil\alpha\rceil\},$$
except when $\alpha$ is an integer. For $\alpha\in\mathbb{N}$, $s_{\alpha,\rho}=s'_{\alpha,\rho}=\alpha^\omega$.

Now we can state the result of Bugeaud and Dubickas.

\begin{thm}[\cite{BD}]\label{BDirr}
Let $\beta\geq2$ be an integer and $\xi$ be an irrational number. Suppose that $s\leq\{\xi \beta^n\}\leq t$ for every integer $n\geq0$. Then $t-s$ cannot be smaller than $1/\beta$. Furthermore, $s\leq\{\xi \beta^n\}\leq s+1/\beta$ for every integer $n\geq0$ if and only if
$\xi=\lfloor\xi\rfloor+ (w)_\beta$ for some mechanical word $w$ with irrational slope.

\end{thm}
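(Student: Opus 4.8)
The plan is to pass to symbolic dynamics on the one-sided full shift over $A_\beta$, establish the sharp bound $1/\beta$ through a combinatorial \emph{balance} argument, and invoke the classical dictionary relating aperiodic balanced words to mechanical words of irrational slope. Since $\beta$ is an integer, $\{\xi\beta^n\}=\{\{\xi\}\beta^n\}=T_\beta^n(x)$ with $x:=\{\xi\}\in(0,1)$ irrational, and from $x_i=\lfloor\beta T_\beta^{i-1}(x)\rfloor$ one gets $d_\beta(T_\beta^n(x))=\sigma^n d_\beta(x)$ (the shift) and hence $T_\beta^n(x)=(\sigma^n d_\beta(x))_\beta$; so the hypothesis says precisely that every shift of $d_\beta(x)$ has $(\cdot)_\beta$-value in $[s,t]$. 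If $t-s\le 1/\beta$ then $[\beta s,\beta t]$ has length at most $1$, so each digit $x_i$ takes at most two, and necessarily consecutive, values; it cannot take only one, for then $d_\beta(x)$ would be eventually periodic and $x$ rational. Thus $d_\beta(x)\in\{m-1,m\}^{\mathbb{N}}$ for some integer $1\le m\le\beta-1$, and because $x$ is irrational $d_\beta(x)$ is aperiodic. Setting $w_i:=x_i-(m-1)$, the word $w\in\{0,1\}^{\mathbb{N}}$ is aperiodic and $(\sigma^n w)_\beta=T_\beta^n(x)-\tfrac{m-1}{\beta-1}$, so all the $(\cdot)_\beta$-values of shifts of $w$ lie in an interval of length $\le 1/\beta$.

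The heart of the matter is that $w$ must then be balanced. If not, the classical lemma on unbalanced binary words gives a (possibly empty) word $Z$ with $0Z0$ and $1Z1$ both factors of $w$; set $\ell:=|Z|+2$. Comparing the shift of $w$ starting at an occurrence of $1Z1$ with the one starting at an occurrence of $0Z0$, their $(\cdot)_\beta$-values $a>b$ satisfy
$$a-b=\frac1\beta+\frac1{\beta^\ell}+R,\qquad R\ge-\sum_{k>\ell}\beta^{-k}=-\frac1{\beta^\ell(\beta-1)},$$
since the two words agree on the block $Z$ in positions $2,\dots,\ell-1$ and differ by $+1$ in positions $1$ and $\ell$. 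Hence $a-b\ge\tfrac1\beta+\tfrac1{\beta^\ell}\cdot\tfrac{\beta-2}{\beta-1}$, which exceeds $1/\beta$ when $\beta\ge 3$; and when $\beta=2$, equality in the bound for $R$ would force $w$ to be eventually $0^\omega$, against aperiodicity, so $a-b>1/\beta$ again. This contradicts the fact that all the values lie in an interval of length $\le 1/\beta$. Therefore $w$ is balanced, hence (being aperiodic) a Sturmian word $s_{\alpha,\rho}$ or $s'_{\alpha,\rho}$ with irrational $\alpha\in(0,1)$; and since the word obtained from $s_{\alpha,\rho}$ by adding $m-1$ to each letter is $s_{\alpha+m-1,\rho}$ (and likewise for $s'$), $d_\beta(x)$ is mechanical of irrational slope.

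Conversely, if $d_\beta(x)$ is mechanical of irrational slope with $\mathrm{alph}(d_\beta(x))=\{m-1,m\}\subseteq A_\beta$, then it is aperiodic, so $x=(d_\beta(x))_\beta$ is irrational; its shift-orbit closure is the Sturmian subshift $X$ of that slope, on which $(\cdot)_\beta$ is the continuous order-preserving inverse of $d_\beta$, so the $T_\beta$-orbit closure of $x$ is $(X)_\beta\subseteq[(\min_{\mathrm{lex}}X)_\beta,(\max_{\mathrm{lex}}X)_\beta]$. As is classical, $\min_{\mathrm{lex}}X$ and $\max_{\mathrm{lex}}X$ differ only in their first letters, namely $m-1$ and $m$, so this interval has length exactly $1/\beta$. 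Combining: if the orbit lies in $[s,t]$ with $t-s\le 1/\beta$ then $d_\beta(x)$ is mechanical of irrational slope, whence the orbit closure --- contained in $[s,t]$ --- already has diameter $1/\beta$, forcing $t-s=1/\beta$; so $t-s\ge 1/\beta$ always, the bound is attained by $\xi=(w)_\beta$ for any such $w$, and the orbit fits in some interval of length $1/\beta$ exactly when $d_\beta(\{\xi\})$ is mechanical of irrational slope, i.e.\ exactly when $\xi=\lfloor\xi\rfloor+(w)_\beta$ with $w=d_\beta(\{\xi\})$. The main obstacle is the balance step, specifically forcing the strict inequality $a-b>1/\beta$ uniformly in $\beta\ge 2$ (the case $\beta=2$ needing aperiodicity), together with checking that $(\cdot)_\beta$ is injective and order-preserving on the sequences actually occurring, so that lexicographic extremality translates to value extremality, and that boundary phenomena ($\beta s\in\mathbb{Z}$, or the digit $\beta-1$) do not disturb the reduction to two letters; the $0Z0/1Z1$ lemma, ``aperiodic $+$ balanced $\Rightarrow$ mechanical of irrational slope'', minimality of Sturmian subshifts, and the extremal-word fact are classical and may be cited.
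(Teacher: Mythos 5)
Your argument is correct. Note first that the paper does not actually prove Theorem \ref{BDirr}: it is quoted from Bugeaud--Dubickas \cite{BD}, and the closest internal analogue is the combination of Lemma \ref{OrbMech} and Theorem \ref{MainMotiv}, which generalize the statement to arbitrary real $\beta>1$. Measured against that, your overall skeleton is the same --- pass to the shift acting on $d_\beta(\{\xi\})$, reduce to a two-letter alphabet, invoke Coven--Hedlund (Proposition \ref{unbalance}) to produce $0Z0$ and $1Z1$ in the unbalanced case, and Morse--Hedlund to identify aperiodic balanced words with Sturmian words --- but you diverge at the key step. The paper kills the unbalanced case purely lexicographically: writing $\overline{d}_\beta(t)=as$ and $\overline{d}_\beta(t+\tfrac1\beta)=bs$, every shift of the orbit's expansion is squeezed between $as$ and $bs$, and $as\leq aua<bub\leq bs$ is an immediate contradiction, with no arithmetic and no case distinction on $\beta$. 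You instead estimate the numerical gap between the values of the shifts beginning at $1Z1$ and at $0Z0$, obtaining $a-b\geq \tfrac1\beta+\tfrac1{\beta^{\ell}}\cdot\tfrac{\beta-2}{\beta-1}$ and then handling $\beta=2$ separately via aperiodicity. Your estimate is sound (the equality case for $\beta=2$ does force an eventually constant tail), but it is strictly less robust: it uses that $\beta$ is an integer and that the interval has length exactly $1/\beta$, whereas the lexicographic comparison works verbatim for every real $\beta>1$ and is what lets the paper prove the stronger Theorem \ref{MainMotiv}. The points you flag as routine (strict order-preservation of $(\cdot)_\beta$ on the sequences that occur, validity of mechanical words as greedy expansions, the extremal words $ac_\alpha$ and $bc_\alpha$ differing only in their first letter) are exactly the standard facts recorded in Propositions \ref{lexParry} and \ref{LexMech} and equation \eqref{Sturm-char}, so your converse direction and the sharpness of $1/\beta$ go through as described.
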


The present paper will generalize their work via $\beta$-transformations (Theorem \ref{MainMotiv}). We also obtain a corresponding result when $\xi$ is a rational number (Corollary \ref{RationalCase}).

\section{Definition of $\Xi$.}

Denoting
$$\langle t\rangle :=
\begin{cases}
    \{t\}, & \text{if\ $t\notin \mathbb{Z}$}, \\
    1, & \text{if\ $t\in \mathbb{Z}$},
  \end{cases}
$$
we define the $\overline{\beta}$-\textit{transformation} $\overline{T}_\beta:[0,1]\rightarrow (0,1]$ by
$$\overline{T}_\beta(x):=\langle \beta x\rangle.$$
Recall that the usual $\beta$-transformation $T_\beta$ is given by $T_\beta(x)=\{ \beta x\}$. It immediately follows that if $\beta\geq2$ is an integer and if $\xi\in[0,1]$ is an irrational number, then
$$T_\beta^n(\xi)=\overline{T}_\beta^n(\xi)= \{\xi \beta^n\}\  \mathrm{for\ every\ } n\geq0.$$
The $\overline{\beta}$-\textit{expansion} of $x\in[0,1]$ is a sequence of integers determined by the following rule:
$$\overline{d}_\beta (x)=(x_i)_{i\geq1},\ \mathrm{where\ } x_i = \lceil \beta \overline{T}_\beta^{i-1} (x)\rceil-1.$$
One observes that $\overline{\beta}$-expansions are almost the same as $\beta$-expansions in that if $d_\beta (x)$ is not finite then $d_\beta (x)=\overline{d}_\beta (x)$, and $T_\beta^n(x)=\overline{T}_\beta^n(x)$ for every $n\geq0$. But these $\overline{\beta}$-expansions and $\overline{\beta}$-transformations will make simple the statements of our main results below. Otherwise, we have to state them separately according as the $\beta$-expansions are finite or not. Two expansions $d_\beta (\cdot)$ and $\overline{d}_\beta (\cdot)$ correlate as the next proposition says. Note that the $\overline{\beta}$-expansions are never finite.

\begin{prop}\label{betabar}
\renewcommand{\theenumi}{\alph{enumi}}
\renewcommand{\labelenumi}{{\rm(\theenumi)}}
\begin{enumerate}
  \item $\overline{d}_\beta(1)$ is the left limit of $d_\beta$ at $x=1$, in other words,
  $$\overline{d}_\beta(1)= \lim_{x\to 1-} d_\beta(x)=
\begin{cases}
    d_\beta(1), & \text{if\ $d_\beta(1)$\ is\ not\ finite}, \\
    (\epsilon_1 \cdots \epsilon_{m-1}(\epsilon_m-1))^\omega, & \text{if\ $d_\beta(1)=\epsilon_1 \cdots \epsilon_m$,\ $\epsilon_m\neq0$}.
  \end{cases}
$$
\label{betabara}

  \item For any $x\in[0,1)$,
$$\overline{d}_\beta(x)=
\begin{cases}
    d_\beta(x), & \text{if\ $d_\beta(x)$\ is\ not\ finite}, \\
    (a_1 \cdots a_{m-1}(a_m-1))\overline{d}_\beta(1), & \text{if\ $d_\beta(x)=a_1 \cdots a_m$,\ $a_m\neq0$}.
  \end{cases}
$$
Here, we use the natural concatenation between a finite word $a_1 \cdots a_{m-1} (a_m-1)$ and an infinite word $\overline{d}_\beta(1)$. \label{betabarb}
\end{enumerate}

\end{prop}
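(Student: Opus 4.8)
The plan is to compare the two transformations $T_\beta$ and $\overline{T}_\beta$ directly, orbit point by orbit point, distinguishing exactly when the two greedy digit sequences can first diverge. The only place $\langle\cdot\rangle$ and $\{\cdot\}$ disagree is at integers, so $\overline{T}_\beta(x)$ and $T_\beta(x)$ coincide unless $\beta x\in\mathbb{Z}$, i.e.\ unless $T_\beta(x)=0$. Hence for a point $x$ with $d_\beta(x)$ not finite, the orbit $T_\beta^{i}(x)$ never hits $0$, so $\overline{T}_\beta^{i}(x)=T_\beta^{i}(x)$ for all $i\geq0$, and consequently $\overline{d}_\beta(x)=d_\beta(x)$. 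This disposes of the first line of each case and of the ``not finite'' branch of (b); it also makes the last sentence of the paragraph preceding the proposition rigorous.

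For part (a): if $d_\beta(1)$ is not finite, the displayed equality $\overline{d}_\beta(1)=d_\beta(1)$ is the special case $x=1$ of the remark above (note $T_\beta(1)=\beta-\lceil\beta\rceil+1\ne0$ in this case since $\beta\notin\mathbb{Z}$), and the identity with $\lim_{x\to1-}d_\beta(x)$ follows from left-continuity of $d_\beta$ coming from strict monotonicity of $d_\beta$ together with the lexicographic order statement quoted in the introduction: the increasing sequence $d_\beta(x)$, $x\uparrow1$, is bounded above by $d_\beta(1)$ and approaches it. If instead $d_\beta(1)=\epsilon_1\cdots\epsilon_m$ with $\epsilon_m\ne0$, I would verify by a direct computation that the point whose $\overline{\beta}$-expansion is $(\epsilon_1\cdots\epsilon_{m-1}(\epsilon_m-1))^\omega$ is exactly $1$: running $\overline{T}_\beta$ on $1$ gives $\overline{T}_\beta(1)=\langle\beta\rangle$, and since $(\epsilon_1\cdots\epsilon_m)_\beta=1$ one gets after $m$ steps $\overline{T}_\beta^{m}(1)=1$ again, so the $\overline{\beta}$-orbit of $1$ is periodic with period $\epsilon_1\cdots\epsilon_{m-1}(\epsilon_m-1)$ in digits (the final digit drops by one because the ceiling, not the floor, is taken precisely when the argument returns to an integer). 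For the limit identity one again invokes monotonicity: $d_\beta(x)\uparrow\sup_{x<1}d_\beta(x)$, and one checks this supremum equals $(\epsilon_1\cdots\epsilon_{m-1}(\epsilon_m-1))^\omega$, the standard fact that a finite greedy expansion is the lexicographic limit from below of the eventually-periodic string obtained by decrementing the last digit.

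For part (b) in the finite case $d_\beta(x)=a_1\cdots a_m$ with $a_m\ne0$: here $T_\beta^{i}(x)\ne0$ for $i<m$ but $T_\beta^{m}(x)=0$, so $\overline{T}_\beta^{i}(x)=T_\beta^{i}(x)$ for $i<m$ while at step $m-1$ we have $\beta\,\overline{T}_\beta^{m-1}(x)=\beta\cdot(a_m/\beta)=a_m\in\mathbb{Z}$, whence the $m$-th $\overline{\beta}$-digit is $\lceil a_m\rceil-1=a_m-1$ and $\overline{T}_\beta^{m-1}(x)$ maps under $\overline{T}_\beta$ to $\langle a_m\rangle=1$. From step $m$ on the $\overline{\beta}$-orbit of $x$ is therefore the $\overline{\beta}$-orbit of $1$, so its digit string is $\overline{d}_\beta(1)$; concatenating gives $\overline{d}_\beta(x)=(a_1\cdots a_{m-1}(a_m-1))\,\overline{d}_\beta(1)$ as claimed.

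I expect the only delicate point to be the bookkeeping in the two ``finite'' cases — specifically, justifying cleanly that a single application of $\overline{T}_\beta$ to an argument landing exactly on a positive integer $k$ returns $1$ and lowers the corresponding digit from $k$ to $k-1$, and then arguing that no \emph{further} digits change (which is automatic once the orbit is at $1$, since $\overline{d}_\beta(1)$ is never finite by construction). The monotonicity/left-continuity input needed for the ``$\lim_{x\to1-}$'' reformulation in (a) is standard and can be cited to the Rényi–Parry theory already referenced; everything else is elementary.
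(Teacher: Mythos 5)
Your proposal is correct and follows essentially the same route as the paper: track the two orbits, observe that $T_\beta^i$ and $\overline{T}_\beta^i$ agree until the $T_\beta$-orbit first hits $0$ (equivalently $\beta\,\overline{T}_\beta^{m-1}$ lands on the integer $\epsilon_m$ resp.\ $a_m$), at which point the $m$-th digit drops by one and $\overline{T}_\beta^m$ returns the point to $1$, whence periodicity in (a) and the concatenation with $\overline{d}_\beta(1)$ in (b). You are in fact somewhat more explicit than the paper about the non-finite branches and the $\lim_{x\to1-}$ reformulation, which the paper's proof leaves implicit.
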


\begin{proof}
\eqref{betabara} Suppose $d_\beta(1)=\epsilon_1 \cdots \epsilon_m$ with $\epsilon_m\neq0$. Then
$T_\beta^i(1)=\overline{T}_\beta^i(1)$ for $0\leq i\leq m-1$, and hence $\overline{d}_\beta(1)$ has a prefix $\epsilon_1 \cdots \epsilon_{m-1}$. But $\overline{T}_\beta^m(1)=1$ while $T_\beta^m(1)=0$, equivalently $\beta \overline{T}_\beta^{m-1} (1)$ is an integer. Whence
$\lceil \beta \overline{T}_\beta^{m-1} (1)\rceil-1 =\lfloor \beta \overline{T}_\beta^{m-1} (1)\rfloor-1 =\epsilon_m -1$. Now $\overline{T}_\beta^{m+1}(1)=\overline{T}_\beta(1)$, $\overline{T}_\beta^{m+2}(1)=\overline{T}_\beta^2(1), \ldots$ and so on.

\eqref{betabarb} Suppose $d_\beta(x)=a_1 \cdots a_m$ with $a_m\neq0$. A similar reasoning shows that $\overline{d}_\beta(x)$ begins with $a_1 \cdots a_{m-1}$, and that $\overline{T}_\beta^m(x)=1$.
\end{proof}

For $x\in[0,1]$, the \textit{frequency} of $x$ is defined to be the limit
$$\lim_{n\to\infty} \frac{1}{n}\sum_{k=1}^n a_k,$$
where $\overline{d}_\beta(x)=a_1 a_2 \cdots$. Roughly speaking, the frequency is the expectation value of digits in the $\overline{\beta}$-expansions.

\begin{eg}
Suppose that $\overline{d}_\beta(x)$ is a mechanical word of slope $\alpha$. Then the frequency of $x$ is equal to $\alpha$.
\end{eg}

For a nonempty subset $S\subset[0,1]$, we say that $S$ has \textit{frequency} $\alpha$ if, for every $x\in S$, its frequency has common value $\alpha$. If $\beta=2$, then $\overline{T}_2$ is nothing but the doubling map:
$x \mapsto 2x \mod 1$, and thus  the frequency of $S$ coincides with the rotation number studied in \cite{BS}. Consequently, the present paper may be a generalization of \cite{BS} in one possible direction.

We say that a set $S$ is invariant under a map $T$ if $T(S)=S$. If $S$ is, in particular, a $T$-orbit (closure), then the phrase `under a map $T$' will be often omitted.
Let $\alpha>0$ and $\beta>1$, and suppose that there is an invariant $\overline{T}_\beta$-orbit closure contained in $[t,t+\frac{1}{\beta}]\subset[0,1]$, and that it has frequency $\alpha$. Then we define a function $\Xi(\alpha,\beta)$ by the supremum of the $\overline{T}_\beta$-orbit, i.e., the maximum value in the $\overline{T}_\beta$-orbit closure. As for the well-definedness of $\Xi$, we will see below that at most one such $\overline{T}_\beta$-orbit closure exists (Proposition \ref{diam+rot}). Even the domain of $\Xi$ is nontrivial for the present, and hence to be determined later. It is worthwhile to mention that the current point of view extends the work of \cite{CK}, where invariant $T_\beta$-orbits contained only in $[1-\frac{1}{\beta},1]$ were investigated.

%
%
%
%       S T U R M I A N  A N D  C H R I S T O F F E L   W O R D S
%
%
%

\section{Sturmian and Christoffel words.}

Let $\mathbb{N}$ be the set of nonnegative integers. Let $A$ be a finite alphabet. Throughout the paper, denote by $A^*$ (resp. $A^\mathbb{N}$)
the set of finite (resp. infinite) words over $A$. Then $A^*$ is a free monoid and the empty word $\varepsilon$ serves as its identity.
For a word $w \in
A^* \cup A^\mathbb{N}$, let $\mathrm{alph}(w)$ denote the set of letters appearing in $w$. The \textit{shift} $\sigma$ on $A^\mathbb{N}$ is a map defined by $\sigma(x_0 x_1 \cdots):=x_1 x_2 \cdots$. If an alphabet $A$ is a subset of $\mathbb{N}$, then the
lexicographic order on $A^\mathbb{N}$ is naturally extended to an order on $A^* \cup A^\mathbb{N}$ by embedding any $x \in A^*$ into
$x0^\omega\in (A\cup\{0\})^\mathbb{N}$. Let $w=a_1 a_2\cdots a_n \in A^*$. We write $\tilde{w}=a_n a_{n-1}\cdots a_1$ for the \textit{reversal} of $w$. A \textit{palindrome} is a finite word $u$ satisfying $\tilde{u}=u$.

For a letter $a\in A$, we mean by $|w|_a$ the number of occurrences of $a$ in $w$, whereas $|w|$ the number of letters, counting multiplicity, in $w$, i.e., $|w|=\sum_{a\in A}|w|_a$. We write $A^k:=\{w\in A^*:|w|=k\}$. A positive integer $p$ is called a \textit{period} of a finite word $w=a_1 a_2\cdots a_n$ provided $a_i=a_{i+p}$ for every $1\leq i\leq n-p$.

Suppose that $w \in A^*\cup A^\mathbb{N}$ is a binary words, say, $\mathrm{alph}(w)=\{a,b\}$. Then $w$ is said to be \textit{balanced} provided $||u|_b-|v|_b|\leq1$ (equivalently, $||u|_a-|v|_a|\leq1$) whenever both $u$ and $v$ are finite factors of $w$ with $|u|=|v|$.
Morse and Hedlund \cite{MH} showed that an infinite balanced word is either a mechanical word or a non-recurrent infinite word, called a \textit{skew word}, which is a mechanical word of rational slope after a nonempty prefix. This rational slope is also called the slope of the skew word. To be more precise, let $\{x,y\}=\{a,b\}$. Skew words are suffixes of
$\mu(x^l yx^\omega),\ l\in \mathbb{N}$ but not of $\mu(x^\omega)$, where $\mu$ is a composition of any finite number of morphisms $\psi_a$ and $\psi_b$ defined by
\begin{align*}
    \psi_a(a)=a,\quad & \psi_b(a)=ba,\\
    \psi_a(b)=ab,\quad & \psi_b(b)=b.
\end{align*}
See, e.g., \cite{AG2}. On the other hand, Coven and Hedlund \cite{CH} characterized the unbalancedness as follows.

\begin{prop}\label{unbalance}
Let $w \in A^*\cup A^\mathbb{N}$ and $\mathrm{alph}(w)=\{a,b\}$. Then $w$ is unbalanced if and only if there exists a palindrome $u$ such that both $aua$ and $bub$ are factors of $w$.
\end{prop}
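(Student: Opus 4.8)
The plan is to prove the two directions separately, with the "hard" content being the forward implication. For the easy direction: suppose there is a palindrome $u$ with both $aua$ and $bub$ factors of $w$. Set $k=|u|+2$ and take the two factors $v_1=aua$ and $v_2=bub$, both of length $k$. Since $|v_1|_a-|v_2|_a = (|u|_a+2)-(|u|_a) = 2$, the two equal-length factors differ in the count of $a$ by $2>1$, so $w$ is unbalanced. This needs only the definition of balanced and requires no use of the palindrome hypothesis beyond matching lengths — in fact any word $u$ works here, the palindrome condition is only needed for the converse.

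For the forward direction, assume $w$ is unbalanced. Then by definition there exist equal-length factors $p,q$ of $w$ with $||p|_b-|q|_b|\ge 2$; among all such pairs choose one with $|p|=|q|=k$ minimal. First I would argue $|p|_b$ and $|q|_b$ differ by exactly $2$ (if they differed by more, deleting the last letter of each would still leave a difference $\ge 2$ with smaller length, contradicting minimality — one has to check the bookkeeping that removing one letter changes each $b$-count by at most $1$). By minimality of $k$ one also shows $p$ and $q$ cannot share a common first letter nor a common last letter: e.g. if both began with the same letter $c$, then $c^{-1}p$ and $c^{-1}q$ are shorter equal-length factors with unchanged difference of $b$-counts. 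Hence, after possibly swapping $p$ and $q$, we may write $p=a p' a$ and $q=b q' b$ where $p'=a^{-1}pa^{-1}$, $q'=b^{-1}qb^{-1}$ have length $k-2$. Since deleting the two boundary letters from $p$ removed exactly the excess, $|p'|_b=|p|_b-2$ wait — more carefully, $|p|_b - |q|_b$ equals $+2$ or $-2$; arranging so that $p$ is the one with more $b$'s forces $p$ to start and end with... one must be slightly careful, but the upshot is that $p$ and $q$ have the form (letter)·(word of length $k-2$)·(letter) with the two outer letters of $p$ both equal and both different from the two outer letters of $q$.

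The crux is then to show $p'$ and $q'$ can be taken to be the \emph{same} word, and that this common word is a palindrome. For the first point, suppose toward a contradiction that $p'\ne q'$: since $|p'|=|q'|=k-2$, minimality of $k$ forces $|p'|_b=|q'|_b$, i.e. $p'$ and $q'$ are abelian-equivalent but distinct. One then finds an index $i$ where they first differ and, by a careful exchange/overlap argument on the factors of $w$ (sliding windows of length $k-2$ inside longer factors witnessing both $p'$ and $q'$), produces two equal-length factors of length $<k$ whose $b$-counts differ by $\ge 2$ — again contradicting minimality. So $p'=q'=:u$, giving $aua$ and $bub$ as factors of $w$. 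Finally, to see $u$ is a palindrome: apply the same minimality machinery to the reversals $\tilde p = a\tilde u a$ and $\tilde q = b \tilde u b$, which are factors of the reversal, and combine with balancedness of all proper subfactors to force $\tilde u = u$; alternatively, iterate the whole argument to peel letters symmetrically from both ends of $u$ and show the outer letters of $u$ on the left match those on the right at every stage. I expect the main obstacle to be exactly this last combinatorial core — reducing to a \emph{common} middle word $u$ and upgrading it to a palindrome — since it is where the minimality of $k$ must be exploited most delicately via overlapping-window arguments rather than simple letter deletion; the rest is straightforward bookkeeping with $b$-counts.
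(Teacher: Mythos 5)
The paper itself offers no proof of this proposition --- it is quoted from Coven--Hedlund with a citation --- so your attempt is judged on its own terms. Your backward direction is correct, and your overall skeleton for the forward direction (take a pair $(p,q)$ of equal-length factors witnessing a $b$-count gap $\ge 2$ with $|p|=|q|=k$ minimal, show the gap is exactly $2$, strip the boundary letters) is exactly the standard route. One small caution there: knowing that $p,q$ cannot share a first letter and cannot share a last letter does not by itself give $p=ap'a$, $q=bq'b$; it is consistent with $p=a\cdots b$, $q=b\cdots a$. You need the count argument: if $|q|_b-|p|_b=2$ and $p$ began with $b$, then deleting the (distinct) first letters would \emph{increase} the gap to $3$ on a shorter pair, contradicting minimality; hence the word with more $b$'s begins and ends with $b$ and the other begins and ends with $a$. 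You gesture at this (``one must be slightly careful'') but do not close it.

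The genuine gap is that the two decisive claims --- that the middle words coincide and that the common middle word is a palindrome --- are announced as ``the main obstacle'' rather than proved, and the mechanisms you sketch for them would not work as stated. No ``sliding window'' or overlap argument is needed for $p'=q'$: writing $p=a\,u_1\cdots u_{k-2}\,a$ and $q=b\,v_1\cdots v_{k-2}\,b$ with $|u|_b=|v|_b$ (forced by minimality), if $i$ is the first index with $u_i\ne v_i$, say $u_i=b$, $v_i=a$, then the length-$(i+1)$ \emph{prefixes} $a\,u_1\cdots u_{i-1}\,b$ and $b\,u_1\cdots u_{i-1}\,a$ of $p$ and $q$... rather, after swapping roles appropriately, the prefixes $b\,s\,b$-type and $a\,s\,a$-type words of length $i+1<k$ already have $b$-counts differing by $2$, contradicting minimality. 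Your reversal idea for palindromicity is flawed: $\tilde p$ and $\tilde q$ are factors of $\tilde w$, not of $w$, so the minimality of $k$ chosen for $w$ does not apply to them. The correct argument again uses only $w$: if $u$ is not a palindrome, let $i$ be minimal with $u_i\ne u_{|u|+1-i}$, say $u_i=a$ and $u_{|u|+1-i}=b$; then the length-$(i+1)$ prefix of $p=aua$ is $a\,u_1\cdots u_{i-1}\,a$ while the length-$(i+1)$ suffix of $q=bub$ is $b\,\widetilde{u_1\cdots u_{i-1}}\,b$ (by minimality of $i$ the reversed block has the same letter counts), and these two factors of $w$ of length $i+1<k$ have $b$-counts differing by $2$, again contradicting minimality. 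Without these two prefix/suffix comparisons written out, the proof is incomplete at precisely its core.
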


If the slope $\alpha$ is irrational with $a+1=b=\lceil\alpha\rceil$, then $s_{\alpha,\rho}$ and $s'_{\alpha,\rho}$ are termed \textit{Sturmian words}, which are known to be simplest amongst all aperiodic infinite words. See, e.g., \cite{Lo}. Both $s_{\alpha,0}$ and $s'_{\alpha,0}$ have a common suffix $c_\alpha$ called the \textit{characteristic word}, that is,
\begin{equation}\label{Sturm-char}
s_{\alpha,0}=a c_\alpha,\ \ s'_{\alpha,0}=b c_\alpha.
\end{equation}

If $\alpha$ is rational, then one sees that $s_{\alpha,\rho}$ and $s'_{\alpha,\rho}$ are purely periodic for any $\rho$. Let $\alpha=p/q>0$ with $\gcd(p,q)=1$. Then the shortest period word $t_{p,q}$ (resp. $t'_{p,q}$) of $s_{\alpha,0}$ (resp. $s'_{\alpha,0}$) are called the \textit{lower} (resp. \textit{upper}) \textit{Christoffel words}. The lower and upper Christoffel words have a common factor $z_{p,q}$ called the \textit{central word}:
\begin{equation}\label{Christ-cent}
t_{p,q}=a z_{p,q}b,\ \ t'_{p,q}=bz_{p,q}a,
\end{equation}
where $a+1=b=\lceil\alpha\rceil$. One readily finds that the central word is a palindrome.

The case where intercept $\rho$ equals zero plays a special role in view of lexicographic order.

\begin{prop}\label{LexMech}
Let $\alpha>0$ be not an integer.
\renewcommand{\theenumi}{\alph{enumi}}
\renewcommand{\labelenumi}{{\rm(\theenumi)}}
\begin{enumerate}
  \item For any $\rho\in[0,1]$, the shift orbit closures of $s_{\alpha,\rho}$ are the set of lower and upper mechanical words with slope $\alpha$. The same statement is true for $s'_{\alpha,\rho}$.

  \item In particular, if $\alpha$ is rational, then for every $\rho\in[0,1]$ the shift orbits of $s_{\alpha,\rho}$ and $s'_{\alpha,\rho}$ coincide with that of $s_{\alpha,0}$.

  \item If $0<\rho<1$, then $s_{\alpha,0}\leq s_{\alpha,\rho}\leq s'_{\alpha,\rho}\leq s'_{\alpha,0}$. If $\alpha$ is irrational, then the first and the last inequalities are strict.
\end{enumerate}

\end{prop}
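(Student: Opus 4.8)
The plan is to run everything through one computational identity — the behaviour of $\lfloor\cdot\rfloor$ and $\lceil\cdot\rceil$ under translation — and to reduce at the outset to $0<\alpha<1$. Indeed, since $\lfloor(\lfloor\alpha\rfloor+\{\alpha\})m+\rho\rfloor=\lfloor\alpha\rfloor m+\lfloor\{\alpha\}m+\rho\rfloor$, one has $s_{\alpha,\rho}(n)=\lfloor\alpha\rfloor+s_{\{\alpha\},\rho}(n)$ and likewise $s'_{\alpha,\rho}(n)=\lfloor\alpha\rfloor+s'_{\{\alpha\},\rho}(n)$ for all $n$; adding the constant $\lfloor\alpha\rfloor$ to every letter respects lexicographic order, the shift, orbit closures and the words at $\rho=0$, so I may and will assume $0<\alpha<1$, all words being over $\{0,1\}$. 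Two identities drive (a) and (b): first, $\sigma s_{\alpha,\rho}=s_{\alpha,\,\alpha+\rho}$ and $\sigma s'_{\alpha,\rho}=s'_{\alpha,\,\alpha+\rho}$, read off from $\lfloor x+k\rfloor=\lfloor x\rfloor+k$ after re-indexing; second, $s_{\alpha,\rho}=s_{\alpha,\rho+1}$ and $s'_{\alpha,\rho}=s'_{\alpha,\rho+1}$. Together they say that the shift orbit of $s_{\alpha,\rho}$ consists of the words $s_{\alpha,\rho'}$ with $\rho'$ running over $\rho+n\alpha$ reduced modulo $1$ ($n\ge0$), and in particular that the set $M_\alpha$ of all lower and upper mechanical words of slope $\alpha$ is $\sigma$-invariant.

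Next I record the continuity of the intercept maps. Because $t\mapsto\lfloor t\rfloor$ is right-continuous and $t\mapsto\lceil t\rceil$ is left-continuous, a coordinatewise check gives, in the product topology, $\lim_{\rho'\to\rho^+}s_{\alpha,\rho'}=\lim_{\rho'\to\rho^+}s'_{\alpha,\rho'}=s_{\alpha,\rho}$ and $\lim_{\rho'\to\rho^-}s_{\alpha,\rho'}=\lim_{\rho'\to\rho^-}s'_{\alpha,\rho'}=s'_{\alpha,\rho}$. Hence $M_\alpha$ is closed, and it is already the closure of $\{s_{\alpha,\rho}:\rho\in[0,1)\}$. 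So the orbit closure of any $s_{\alpha,\rho}$ or $s'_{\alpha,\rho}$ lies in $M_\alpha$. For the reverse inclusion when $\alpha$ is irrational, $\{\rho+n\alpha\bmod 1:n\ge0\}$ is dense in $[0,1)$; given a target $s_{\alpha,t}\in M_\alpha$ and a length $N$, only finitely many intercepts in $[0,1)$ affect the first $N$ letters of a mechanical word, so for small $\varepsilon$ the word $s_{\alpha,\rho'}$ shares the length-$N$ prefix of $s_{\alpha,t}$ whenever $\rho'\in(t,t+\varepsilon)$, and such $\rho'=\rho+n\alpha\bmod1$ occur; approaching $t$ from the left hits $s'_{\alpha,t}$ instead. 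Thus the orbit is dense in $M_\alpha$, proving (a) for irrational $\alpha$.

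When $\alpha=p/q$ in lowest terms, $\alpha q=p\in\mathbb Z$ forces $s_{p/q,\rho}(n+q)=s_{p/q,\rho}(n)$, so these words are periodic of period $q$ and every orbit closure equals its (finite) orbit. As a function of $\rho$, the word $s_{p/q,\rho}$ is piecewise constant with its jumps only at integer multiples of $1/q$, so it takes at most $q$ values, one on the interior of each arc $[j/q,(j+1)/q)$; at a jump point the one-sided limit formulas above identify $s_{p/q,\rho}$ and $s'_{p/q,\rho}$ with adjacent arc-values, so no new words appear and $M_{p/q}$ is exactly these $q$ words. They are distinct — being the $q$ cyclic conjugates of the primitive lower Christoffel word $t_{p,q}$ — and the shift, acting as rotation of the intercept by $p/q$, permutes the $q$ arcs in a single cycle; hence the orbit of any $s_{p/q,\rho}$ or $s'_{p/q,\rho}$ is all of $M_{p/q}$. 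This gives (a) and (b) simultaneously.

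Finally, part (c), still with $0<\alpha<1$. Write $\lfloor\alpha m+\rho\rfloor=\lfloor\alpha m\rfloor+\delta_m$, where $\delta_m\in\{0,1\}$ equals $1$ precisely when $\{\alpha m\}\ge 1-\rho$; then $s_{\alpha,\rho}(n)=s_{\alpha,0}(n)+\delta_{n+1}-\delta_n$, and since $\rho<1$ gives $\delta_0=0$, at the first coordinate where $s_{\alpha,\rho}$ and $s_{\alpha,0}$ disagree $\delta$ must jump from $0$ to $1$, so $s_{\alpha,\rho}$ is the larger there: $s_{\alpha,0}\le s_{\alpha,\rho}$, with equality only if $\delta\equiv0$, which fails for irrational $\alpha$ because $\{\alpha m\}$ is dense. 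A mirror-image computation, writing $\lceil\alpha m+\rho\rceil=\lceil\alpha m\rceil+\varepsilon_m$ with $\varepsilon_m=1$ exactly when $\{-\alpha m\}<\rho$ (so $\varepsilon_0=1$ since $\rho>0$), yields $s'_{\alpha,\rho}\le s'_{\alpha,0}$, strict for irrational $\alpha$. The remaining link — the relative order of the lower and the upper mechanical word at a common nonzero intercept — is the delicate one: $s_{\alpha,\rho}$ and $s'_{\alpha,\rho}$ agree at every coordinate $n$ except those with $\alpha n+\rho\in\mathbb Z$ or $\alpha(n+1)+\rho\in\mathbb Z$, and within any prefix there are only finitely many such; the whole content here is the short case analysis that settles the lexicographic comparison at the first of them. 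This is the step I expect to require the most care, the rest being routine bookkeeping with the floor–ceiling identities above together with the extremal descriptions $s_{\alpha,0}=0c_\alpha$, $s'_{\alpha,0}=1c_\alpha$ from \eqref{Sturm-char}.
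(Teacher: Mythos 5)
Your reduction to $0<\alpha<1$, the identities $\sigma s_{\alpha,\rho}=s_{\alpha,\alpha+\rho}$ and $s_{\alpha,\rho}=s_{\alpha,\rho+1}$, and the one-sided-limit description of $M_\alpha$ give a correct, self-contained proof of (a) and (b) (the paper itself only cites \cite{Lo,BLRS} here, so this is already more than the paper provides); likewise your $\delta_m$ and $\varepsilon_m$ bookkeeping correctly establishes the outer inequalities $s_{\alpha,0}\leq s_{\alpha,\rho}$ and $s'_{\alpha,\rho}\leq s'_{\alpha,0}$ together with their strictness for irrational $\alpha$.

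The genuine gap is the middle inequality $s_{\alpha,\rho}\leq s'_{\alpha,\rho}$, which you explicitly defer as a ``short case analysis.'' That case analysis cannot be completed, because the inequality is false exactly at the intercepts where the two words differ. With $0<\alpha<1$ take $\rho=1-\alpha\in(0,1)$: then $s_{\alpha,\rho}(n)=\lfloor\alpha n\rfloor-\lfloor\alpha(n-1)\rfloor$ and $s'_{\alpha,\rho}(n)=\lceil\alpha n\rceil-\lceil\alpha(n-1)\rceil$, so $s_{\alpha,1-\alpha}=1\,s_{\alpha,0}=10c_\alpha$ while $s'_{\alpha,1-\alpha}=0\,s'_{\alpha,0}=01c_\alpha$, and $10c_\alpha>01c_\alpha$. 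More generally, whenever $\alpha n_0+\rho\in\mathbb{Z}$ for some $n_0\geq1$ the two words have the form $w10t$ versus $w01t$, with the \emph{lower} word the larger; your own identity $s'_{\alpha,\rho}(n)-s_{\alpha,\rho}(n)=[\alpha(n+1)+\rho\notin\mathbb{Z}]-[\alpha n+\rho\notin\mathbb{Z}]$ already shows the first nonzero difference is $-1$. So the chain as printed in the proposition is an overstatement for these countably many $\rho$. What the rest of the paper actually uses from (c) is only $s_{\alpha,0}\leq s_{\alpha,\rho},\,s'_{\alpha,\rho}\leq s'_{\alpha,0}$, and the two cross inequalities you are missing ($s_{\alpha,0}\leq s'_{\alpha,\rho}$ and $s_{\alpha,\rho}\leq s'_{\alpha,0}$) do \emph{not} follow by chaining through the middle link; prove them directly by the same device, e.g.\ writing $\lceil\alpha m+\rho\rceil=\lfloor\alpha m\rfloor+\eta_m$ with $\eta_m=\lceil\{\alpha m\}+\rho\rceil\in\{1,2\}$ and $\eta_0=1$, so that the first nonzero value of $\eta_{n+1}-\eta_n$ is $+1$ and $s_{\alpha,0}\leq s'_{\alpha,\rho}$. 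You should record the correct version of the middle comparison (equality off the exceptional set, a single $10\leftrightarrow01$ transposition on it) rather than attempt to prove the stated inequality.
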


\begin{proof}
See \cite{Lo,BLRS}.
\end{proof}

For a finite word $w$, denote by $w^+$ the unique shortest palindrome having $w$ as a prefix. For instance,
$(abbab)^+=abbabba$. This palindrome $w^+$ is called the \textit{(right) palindromic closure} of $w$.
Over a binary alphabet $A=\{a,b\}$, a function $\mathrm{Pal}:A^*  \rightarrow A^*$ is defined as follows:
\renewcommand{\theenumi}{\roman{enumi}}
\renewcommand{\labelenumi}{{\rm(\theenumi)}}
\begin{enumerate}
  \item $\mathrm{Pal}(\varepsilon):=\varepsilon$,
  \item if $w=vz$ for some $z\in\{a,b\}$, then $\mathrm{Pal}(w):=(\mathrm{Pal}(v)z)^+$.
\end{enumerate}

In what follows, we need some combinatorics on central words. For more details, see \cite{Lo,BLRS} and the bibliography therein.

\begin{prop}\label{palin}
Let $P$ be the set of palindromes over $\{a,b\}$.
\renewcommand{\theenumi}{\alph{enumi}}
\renewcommand{\labelenumi}{{\rm(\theenumi)}}
\begin{enumerate}
 \item A word over $\{a,b\}$ is central if and only if it is a power of a single letter or belongs to $P\cap PabP =P\cap PbaP $.

 \item A word $u\in\{a,b\}^*$ is central if and only if $u=\mathrm{Pal}(v)$ for some $v\in\{a,b\}^*$.

 \item Given a central word $w$ being not a power of a single letter, there are unique palindromes $p$ and $q$ such that $w=pabq=qbap$. Furthermore, both $p$ and $q$ are also central, and $w$ has relatively prime periods $|p|+2$ and $|q|+2$.
\end{enumerate}
\end{prop}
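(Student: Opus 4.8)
\noindent\emph{Proof proposal.}
All three assertions are classical properties of central words, going back to de~Luca; see \cite{Lo} and \cite{BLRS} and the references therein. I would isolate one ``engine'' --- the description of central words by their periods --- derive (a) and (c) from it, and obtain (b) by a two-sided induction. \emph{The engine:} a word $w\in\{a,b\}^*$ is central if and only if it is a power of a single letter, or it has two coprime periods $r<s$ with $r+s=|w|+2$ (the case $r=1$, with $s=|w|+1$ a vacuous period, being exactly the powers of a single letter). That a central word $w=z_{p,q}$ has such a pair of periods would be proved by induction on $|w|$, using the standard factorisation of the Christoffel word $az_{p,q}b=t_{p,q}$ into the two Christoffel words attached to the Stern--Brocot parents of the slope $p/q$: their lengths $q_1,q_2$ are the two periods, $q_1+q_2=q=|w|+2$, and $\gcd(q_1,q_2)=1$ by the standard relation between Stern--Brocot neighbours. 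For the converse, if $w$ has coprime periods $r<s$ with $r+s=|w|+2$, then (the Fine--Wilf bound being attained) $w$ cannot be unbalanced --- else Proposition~\ref{unbalance} would give a palindrome $u$ with $aua$ and $bub$ both factors, which two coprime periods this long forbid --- and a finite balanced word with exactly these periods is readily identified with the appropriate $z_{p,q}$ (it is a factor of some mechanical word, whose slope the periods determine). This engine, and the periodicity bookkeeping it rests on, is the step I expect to carry the real weight.

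Granting the engine, part (a) is short. The equality $P\cap PabP=P\cap PbaP$ follows by reversal: if $w=p\,ab\,q$ with $w,p,q\in P$, then $w=\tilde{w}=\tilde{q}\,ba\,\tilde{p}=q\,ba\,p\in P\cap PbaP$, and symmetrically. Now let $w$ be central and not a power of a letter, with coprime periods $r<s$, $r+s=|w|+2$. Since $w$ is a palindrome with period $r$, its prefix of length $|w|-r$ equals its suffix of the same length, which for a palindrome is the reversal of that prefix; so this prefix $q$ is a palindrome, and similarly the prefix $p$ of length $|w|-s$. One has $|p|+|q|+2=|w|$, hence $w=p\,xy\,q$ with $xy=w_{|p|+1}w_{|p|+2}$; the very extremality that made $w$ balanced forces $x\neq y$, and then the reversal identity puts $w$ into $P\cap PabP$. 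Conversely a word in $P\cap PabP$ carries periods $|p|+2$ and $|q|+2$, which sum to $|w|+2$ and are coprime (again by Fine--Wilf, or by the induction of part (b)), hence is central. Part (c) refines this: $p$ and $q$ themselves satisfy the period criterion for their own lengths (their periods arise from $r,s$ by a Euclid-type reduction), so they are central; and the pair $(p,q)$ is unique, because $\min(|p|+2,|q|+2)$ is forced to be the smallest period of $w$ --- any strictly smaller period would, by iterated Fine--Wilf against the coprime periods $|p|+2$ and $|q|+2$, force $w$ to have period $1$, a contradiction --- which pins down $|q|$, hence $q$ as a prefix of $w$, and then $p$.

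Finally, part (b): both implications go by induction using (c). For ``$\mathrm{Pal}(v)$ is central'' one inducts on $|v|$: $\mathrm{Pal}(\varepsilon)=\varepsilon$ is central, and if $u$ is central then so is the palindromic closure $(uz)^+$ of $u$ followed by a letter $z$; indeed, if $z$ continues the shorter period of $u$ this merely lengthens a run, and otherwise $(uz)^+$ is $uz$ followed by the reversal of a suitable central prefix of $u$, and in either case the result again carries two coprime periods summing to its length plus $2$, so the engine applies. For the converse, given central $w\neq\varepsilon$, let $u$ be the longest proper palindromic prefix of $w$; using (c) one checks that $u$ is central (it is the longer of $p$ and $q$, apart from the power-of-a-letter case, which is handled directly) and that $w=(uz)^+$ where $z$ is the letter of $w$ immediately following $u$; by induction $u=\mathrm{Pal}(v)$, and therefore $w=\mathrm{Pal}(vz)$. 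As with (a) and (c), the obstacle here is not a single idea but the careful matching of palindromic closures and longest palindromic prefixes with the period data --- routine once the engine of the first step is available, which is why in the literature this is left to the references.
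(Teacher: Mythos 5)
The paper offers no proof of this proposition at all: it is stated as background and deferred entirely to \cite{Lo,BLRS}. Your proposal therefore cannot match ``the paper's approach''; what it does is reconstruct the standard development from the literature, namely the de Luca--Mignosi periods theorem (a word is central iff it is a power of a letter or has coprime periods $r<s$ with $r+s=|w|+2$), from which (a) and (c) follow by the palindromic-prefix/suffix bookkeeping you describe, and (b) by induction on palindromic closures. This architecture is correct and is indeed how \cite{BLRS} organizes the material; your uniqueness argument in (c) via iterated Fine--Wilf is sound (a period $t<\min(r,s)$ combined with $s$ and then $r$ does force period $\gcd(t,s,r)=1$). Two steps carry essentially all the weight and are asserted rather than proved: the converse of the ``engine'' (that coprime periods $r+s=|w|+2$ already force centrality --- note Fine--Wilf does not apply at this extremal length, and Proposition~\ref{unbalance} alone does not immediately yield that the word is a factor of a mechanical word of the slope determined by $r,s$), and the claim in (a) that the two middle letters $x,y$ satisfy $x\neq y$. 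Both are classical but genuinely nontrivial; since the paper itself proves nothing here, your sketch is a reasonable stand-in, but a self-contained write-up would have to supply those two arguments in full.
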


Given a central word $u$, the next proposition enables us to find the word $v$ such that $u=\mathrm{Pal}(v)$.

\begin{prop}\label{FuncPal}
The function $\mathrm{Pal}:\{a,b\}^*  \rightarrow \{a,b\}^*$ is injective. To be more precise, let
$\varepsilon=p_1,p_2,\ldots,p_n,p_{n+1}=\mathrm{Pal}(w)$ be all the palindromic prefixes of a central word $\mathrm{Pal}(w)$ such that $0=|p_1|<|p_2|<\cdots <|p_n|<|\mathrm{Pal}(w)|$. Suppose further that all of
$p_1z_1,p_2z_2,\ldots,p_nz_n$ with $z_i\in\{a,b\}$ are prefixes of $\mathrm{Pal}(w)$, i.e., $z_i$ is the letter just after $p_i$. Then $w=z_1z_2\cdots z_n$.

\end{prop}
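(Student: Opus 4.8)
The plan is to reduce everything to one structural fact: for any $w = c_1 c_2 \cdots c_m \in \{a,b\}^*$ (with each $c_i \in \{a,b\}$), the palindromic prefixes of $\mathrm{Pal}(w)$ are exactly the words $\mathrm{Pal}(c_1 \cdots c_i)$ for $i = 0, 1, \ldots, m$, and these have strictly increasing lengths. Granting this, the displayed formula and the injectivity claim both fall out by bookkeeping, as explained below.

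To prove the structural fact I would argue by induction on $m$, the case $m = 0$ being trivial. For the inductive step, put $u := \mathrm{Pal}(c_1 \cdots c_{m-1})$ and $z := c_m$, so that $\mathrm{Pal}(w) = (uz)^+$ by the defining rule of $\mathrm{Pal}$. Two observations do the work. First, by definition of the palindromic closure, $uz$ is a prefix of $(uz)^+ = \mathrm{Pal}(w)$; in particular $u$ is a \emph{proper} prefix of $\mathrm{Pal}(w)$, so every palindromic prefix of $u$ is a palindromic prefix of $\mathrm{Pal}(w)$ of length $< |\mathrm{Pal}(w)|$. Second --- and this is the only point needing care --- there is no palindrome $p$ with $|u| < |p| < |\mathrm{Pal}(w)|$ that is a prefix of $\mathrm{Pal}(w)$: such a $p$ would have $|p| \geq |u| + 1 = |uz|$, hence (being a prefix of $\mathrm{Pal}(w)$, which begins with $uz$) would itself begin with $uz$, and then minimality of the palindromic closure $(uz)^+$ among palindromes having $uz$ as a prefix would force $|p| \geq |(uz)^+| = |\mathrm{Pal}(w)|$, a contradiction. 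Combining the two observations, the palindromic prefixes of $\mathrm{Pal}(w)$ are precisely those of $u = \mathrm{Pal}(c_1 \cdots c_{m-1})$ together with $\mathrm{Pal}(w)$ itself; applying the inductive hypothesis to $c_1 \cdots c_{m-1}$ finishes the step, and strict monotonicity of lengths is preserved because $u$ is a proper prefix of $\mathrm{Pal}(w)$.

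With the structural fact in hand, let $p_1, p_2, \ldots, p_{n+1} = \mathrm{Pal}(w)$ be the palindromic prefixes of $\mathrm{Pal}(w)$ listed by increasing length as in the statement. Matching this list against $\mathrm{Pal}(\varepsilon), \mathrm{Pal}(c_1), \ldots, \mathrm{Pal}(c_1 \cdots c_m)$ (also strictly increasing in length) forces $n = m$ and $p_{i+1} = \mathrm{Pal}(c_1 \cdots c_i)$ for $0 \leq i \leq m$. Then, for $1 \leq i \leq n$, $p_{i+1} = \mathrm{Pal}(c_1 \cdots c_i) = (\mathrm{Pal}(c_1 \cdots c_{i-1})\, c_i)^+ = (p_i c_i)^+$, which has $p_i c_i$ as a prefix; since $p_{i+1}$ is itself a prefix of $\mathrm{Pal}(w)$, the letter of $\mathrm{Pal}(w)$ immediately following $p_i$ is $c_i$. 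Hence the letters $z_i$ of the statement satisfy $z_i = c_i$, and $w = c_1 \cdots c_m = z_1 z_2 \cdots z_n$, as claimed. Injectivity of $\mathrm{Pal}$ is then automatic, since $w$ is reconstructed from $\mathrm{Pal}(w)$ by this recipe of reading the letters following the proper palindromic prefixes. I expect no serious obstacle here: the whole argument hinges on the elementary minimality property of the palindromic closure invoked in the inductive step.
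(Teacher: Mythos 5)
Your argument is correct, but note that the paper itself gives no proof of Proposition \ref{FuncPal}: it is quoted as a known fact about central words, with the reader referred to \cite{Lo,BLRS} (where it appears via the standard theory of palindromic prefixes of epi/Sturmian words). What you supply is a self-contained replacement, and the key step holds up: the claim that $\mathrm{Pal}(c_1\cdots c_m)$ has no palindromic prefix of length strictly between $|\mathrm{Pal}(c_1\cdots c_{m-1})|$ and $|\mathrm{Pal}(c_1\cdots c_m)|$ follows exactly as you say from the definition of $(\cdot)^+$ as the \emph{shortest} palindrome with the given prefix --- any intermediate palindromic prefix $p$ would contain $uz$ as a prefix and hence satisfy $|p|\geq |(uz)^+|$. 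Together with the observation that palindromic prefixes of $\mathrm{Pal}(w)$ of length at most $|u|$ are precisely the palindromic prefixes of $u$, the induction identifies the full list of palindromic prefixes with $\mathrm{Pal}(c_1\cdots c_i)$, $0\leq i\leq m$, and the reconstruction $z_i=c_i$ and the injectivity of $\mathrm{Pal}$ are then immediate. The only cosmetic gap is that you implicitly use that $\mathrm{Pal}$ always outputs a palindrome (so that $u$ itself is a palindromic prefix of $\mathrm{Pal}(w)$); this is built into the definition via the palindromic closure and costs nothing to state. Compared with the paper's citation, your route buys a short, elementary, and fully verifiable proof at the price of one induction; it proves exactly the statement needed and nothing less.
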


%
%
%
%       DOMAIN OF $\Xi$
%
%
%

\section{Domain of $\Xi$.}\label{SecDomain}

The $\beta$-expansion of $1$ dominates the other ones of $x\in[0,1)$ in lexicographic order. And this property completely characterizes the possible $\beta$-expansions.

\begin{prop}[\cite{Pa}]\label{lexParry}
Given $\beta>1$, let $s\in A_\beta^\mathbb{N}$.
Then $s=d_\beta(x)$ for some $x\in[0,1)$ if
and only if
$$\sigma^n(s)< \overline{d}_\beta (1)\ \ \textit{for\ all\ } n\geq 0.$$
\end{prop}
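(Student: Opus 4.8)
The statement is Parry's lexicographic characterization of $\beta$-expansions, and the plan is to prove both implications directly. Write $w:=\overline{d}_\beta(1)$. I shall use the following standard facts, each immediate from the defining recursions together with the boundedness of the iterates of $T_\beta$ and $\overline{T}_\beta$: the shift-commutation identities $\sigma^n(d_\beta(x))=d_\beta(T_\beta^n(x))$ for $x\in[0,1)$ and $\sigma^n(\overline{d}_\beta(x))=\overline{d}_\beta(\overline{T}_\beta^n(x))$ for $x\in[0,1]$, and the evaluations $(d_\beta(x))_\beta=x$ and $(\overline{d}_\beta(x))_\beta=x$. In particular $(w)_\beta=1$, and $(\sigma^n w)_\beta=\overline{T}_\beta^n(1)\in(0,1]$ for every $n\ge0$. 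The forward implication is short; the substance lies in the converse, which I would route through a single lemma on the value map $(\cdot)_\beta$.

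For necessity, suppose $s=d_\beta(x)$ with $x\in[0,1)$. Then for each $n\ge0$ we have $\sigma^n(s)=d_\beta(y)$ with $y:=T_\beta^n(x)\in[0,1)$. The lexicographic monotonicity of $d_\beta$ recalled in the introduction, combined with $w=\lim_{z\to 1-}d_\beta(z)$ from Proposition~\ref{betabar}, forces $d_\beta(y)\le w$; moreover $d_\beta(y)\ne w$, since otherwise $y=(d_\beta(y))_\beta=(w)_\beta=1$. Hence $\sigma^n(s)<w$.

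For sufficiency, assume $\sigma^n(s)<w$ for all $n\ge0$, set $x:=(s)_\beta$, and put $r_n:=(\sigma^n s)_\beta$ for $n\ge0$, so that $r_0=x$ and $\beta r_n=s_{n+1}+r_{n+1}$. The key step is the lemma: if $u\in A_\beta^{\mathbb N}$ satisfies $\sigma^n(u)<w$ for all $n\ge0$, then $(u)_\beta<1$. Granting it and applying it to every $u=\sigma^n(s)$ (which inherits the hypothesis), all $r_n$ lie in $[0,1)$; then the relation $\beta r_n=s_{n+1}+r_{n+1}$ with $r_{n+1}\in[0,1)$ forces $s_{n+1}=\lfloor\beta r_n\rfloor$ and $r_{n+1}=\{\beta r_n\}=T_\beta(r_n)$, so inductively $r_n=T_\beta^n(x)$ and $s_{n+1}=\lfloor\beta T_\beta^n(x)\rfloor$ for all $n$; that is, $s=d_\beta(x)$ with $x=r_0\in[0,1)$.

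Proving the lemma is the main obstacle, precisely because $(\cdot)_\beta$ is \emph{not} monotone for the lexicographic order once $\beta\notin\mathbb Z$ (for instance $0\,1^\omega$ is lexicographically below $1\,0^\omega$ yet above it in value in a non-integer base), so the hypothesis must be exploited on every shift, not merely on $u$ itself. The plan is to follow the positions where $u$ falls below $w$. For each $n$ let $\ell(n)\ge1$ be the least index with $(\sigma^n u)_{\ell(n)}<w_{\ell(n)}$ (it exists since $\sigma^n u<w$), and set $\delta_n:=w_{\ell(n)}-(\sigma^n u)_{\ell(n)}\ge1$. Splitting the series $r_n=\sum_{i\ge1}(\sigma^n u)_i\beta^{-i}$ at $i=\ell(n)$ and using that $(\sigma^n u)_i=w_i$ for $i<\ell(n)$, that $\sum_{i\ge1}w_i\beta^{-i}=1$, and that $\sum_{j\ge1}w_{\ell(n)+j}\beta^{-j}=\overline{T}_\beta^{\ell(n)}(1)$, one gets the identity
$$r_n=1-\beta^{-\ell(n)}\bigl(\delta_n+\overline{T}_\beta^{\ell(n)}(1)-r_{n+\ell(n)}\bigr),\qquad \overline{T}_\beta^{\ell(n)}(1)\in(0,1].$$
Along the index orbit $n_0:=0$, $n_{j+1}:=n_j+\ell(n_j)$, which is strictly increasing and hence infinite, write $a_j:=\delta_{n_j}+\overline{T}_\beta^{\ell(n_j)}(1)-r_{n_{j+1}}$, so that $r_{n_j}=1-\beta^{-\ell(n_j)}a_j$; in particular $a_j>0\iff r_{n_j}<1$. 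Now argue by dichotomy. If $a_k\ge0$ for some $k\ge1$, then $r_{n_k}\le1$, hence (using $\delta_{n_{k-1}}\ge1$) $a_{k-1}\ge\overline{T}_\beta^{\ell(n_{k-1})}(1)>0$, hence $r_{n_{k-1}}<1$; iterating this step downward yields $a_0>0$, i.e.\ $(u)_\beta=r_0<1$. Otherwise $a_j<0$ for all $j\ge1$, so $r_{n_j}>1$ for all $j\ge1$, and rewriting the identity as $r_{n_{j+1}}=\delta_{n_j}+\overline{T}_\beta^{\ell(n_j)}(1)+\beta^{\ell(n_j)}(r_{n_j}-1)$ gives $r_{n_{j+1}}-1>\beta(r_{n_j}-1)$ for $j\ge1$, whence $r_{n_j}-1>\beta^{j-1}(r_{n_1}-1)\to\infty$, contradicting $r_{n_j}\le(\lceil\beta\rceil-1)/(\beta-1)$. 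This settles the lemma, and with it the proposition. Besides the telescoping dichotomy, the one calculation needing care is the displayed bookkeeping identity; everything else is routine.
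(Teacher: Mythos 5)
Your argument is correct. Note, though, that the paper itself offers no proof of this proposition: it is stated with a citation to Parry's 1960 paper and used as a black box, so there is nothing internal to compare against and your write-up is necessarily a self-contained substitute rather than a variant of the author's route. The two directions are handled soundly: necessity follows legitimately from the monotonicity of $d_\beta$ recalled in the introduction together with Proposition~\ref{betabar}(a) (and the evaluation identity rules out equality), and this is not circular, since monotonicity of the greedy expansion does not presuppose the characterization being proved. The real content is your lemma that $\sigma^n(u)<\overline{d}_\beta(1)$ for all $n$ forces $(u)_\beta<1$; you are right that this is the point where a naive argument breaks, because $(\cdot)_\beta$ is not lexicographically monotone for non-integer $\beta$. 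I checked the bookkeeping identity $r_n=1-\beta^{-\ell(n)}\bigl(\delta_n+\overline{T}_\beta^{\ell(n)}(1)-r_{n+\ell(n)}\bigr)$ (it rests on $(\sigma^{\ell}w)_\beta=\overline{T}_\beta^{\ell}(1)\in(0,1]$ and $(w)_\beta=1$, both valid for the $\overline{\beta}$-expansion) and both branches of the dichotomy: the downward induction from $a_k\ge0$ uses exactly $\delta\ge1$ and $\overline{T}_\beta^{\ell}(1)>0$, and the expanding-recursion branch correctly contradicts the uniform bound $r_n\le(\lceil\beta\rceil-1)/(\beta-1)$. Compared with the standard textbook treatments (which typically run an induction on the first index where $\sigma^n(s)$ drops below $\overline{d}_\beta(1)$ to show the greedy algorithm reproduces $s$), your telescoping along the index orbit $n_{j+1}=n_j+\ell(n_j)$ is an equivalent but cleanly quantitative packaging; it would be a reasonable appendix if the author wished to make the paper self-contained, but as the proposition is classical the citation suffices.
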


We recall the devil's staircase in \cite{Kw-devil}. A function $\Delta:[0,\infty)\rightarrow \mathbb{R}$ is defined as follows. First, $\Delta(0):=1$. For $\alpha>0$, the value $\Delta(\alpha)$ is defined to be the real number $\beta$ for which the $\overline{\beta}$-expansion of $1$ is equal to $s'_{\alpha,0}$. In terms of the usual $\beta$-expansion, one may state

$$d_{\Delta(\alpha)}(1):=
\begin{cases}
    bc_\alpha=s'_{\alpha,0}, & \text{if\ $\alpha$\ is\ irrational}, \\
    bz_{p,q}b, & \text{if\ $\alpha=p/q$},
  \end{cases}
$$
where $b=\lceil\alpha\rceil$.

\begin{lem} \label{OrbMech}
Suppose that a $\overline{T}_\beta$-orbit of $x\in[0,1]$ is contained in $[t,t+\frac{1}{\beta}]\subset[0,1]$. Then the $\overline{\beta}$-expansion of $x$ is either a mechanical word or a skew word. In addition, if the $\overline{T}_\beta$-orbit closure of $x$ is invariant under $\overline{T}_\beta$, then the $\overline{\beta}$-expansion of $x$ is a mechanical word.
\end{lem}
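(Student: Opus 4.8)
My plan is to pass to the symbolic picture and extract a sharp metric inequality. Write $w=\overline{d}_\beta(x)=a_1a_2\cdots$ and $y_n:=\overline{T}_\beta^n(x)$, so that $[t,t+\tfrac1\beta]$ contains every $y_n$. From the defining recursions one gets $\beta y_n=a_{n+1}+y_{n+1}$ for all $n\ge0$, because $\lceil\beta y_n\rceil-1+\langle\beta y_n\rangle=\beta y_n$ whether or not $\beta y_n$ is an integer; iterating this and using that $\overline{T}_\beta^N(z)\in(0,1]$ stays bounded shows $z=(\overline{d}_\beta(z))_\beta$ for every $z\in[0,1]$, whence $\overline{d}_\beta$ is injective, $\overline{d}_\beta(y_n)=\sigma^n(w)$, and $y_n=(\sigma^n(w))_\beta$. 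I will also use that $\overline{d}_\beta$ is strictly order preserving, i.e. $y<z$ iff $\overline{d}_\beta(y)<\overline{d}_\beta(z)$ lexicographically, which follows directly from $\beta z=(\text{first digit})+\overline{T}_\beta(z)$ by an induction on the length of the common prefix. Finally, $y_n\in[t,t+\tfrac1\beta]$ forces $\beta y_n\in[\beta t,\beta t+1]$, hence $\lceil\beta y_n\rceil\in\{\lceil\beta t\rceil,\lceil\beta t\rceil+1\}$, so $w$ is a word over the two-letter alphabet $\{a,b\}$ with $a:=\lceil\beta t\rceil-1$ and $b:=a+1$ (a one-letter alphabet being allowed, in which case $w$ is already a mechanical word).

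Next I would show $w$ is balanced by contradiction. If $w$ is not balanced, Proposition \ref{unbalance} supplies a (palindromic) word $u$ with both $aua$ and $bub$ factors of $w$; say $a_{i+1}\cdots a_{i+\ell+2}=aua$ and $a_{j+1}\cdots a_{j+\ell+2}=bub$ with $\ell=|u|$. Since $\overline{d}_\beta(y_i)=\sigma^i(w)$ begins with the letter $a$, we have $a=\lceil\beta y_i\rceil-1$ and so $\beta y_i=a+y_{i+1}$; similarly $\beta y_j=b+y_{j+1}$, whence $\beta(y_j-y_i)=(b-a)+(y_{j+1}-y_{i+1})=1+(y_{j+1}-y_{i+1})$. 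On the other hand $\overline{d}_\beta(y_{i+1})=\sigma^{i+1}(w)$ begins with $ua$ while $\overline{d}_\beta(y_{j+1})=\sigma^{j+1}(w)$ begins with $ub$, so these agree in their first $\ell$ letters and then carry $a<b$; by order preservation $y_{j+1}-y_{i+1}>0$, hence $\beta(y_j-y_i)>1$, i.e. $y_j-y_i>\tfrac1\beta$, contradicting $y_i,y_j\in[t,t+\tfrac1\beta]$. Thus $w$ is balanced, and by the Morse--Hedlund theorem \cite{MH} $w$ is a mechanical word or a skew word.

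For the last assertion, suppose the orbit closure $\overline{O}:=\overline{\{y_n:n\ge0\}}$ is $\overline{T}_\beta$-invariant while $w$ is a skew word. A skew word is a mechanical word of rational slope after a nonempty prefix, hence eventually periodic, so $\{\sigma^n(w):n\ge0\}$ is finite, and therefore $\{y_n:n\ge0\}=\{(\sigma^n(w))_\beta:n\ge0\}$ is finite; being finite it is closed, so $\overline{O}=\{y_n:n\ge0\}$ and $\overline{T}_\beta(\overline{O})=\{y_n:n\ge1\}$. Now $x=y_0\in\overline{O}$; but a skew word is non-recurrent, hence not purely periodic, so $\sigma^n(w)\ne w$ for every $n\ge1$, and by injectivity of $\overline{d}_\beta$ this gives $y_n\ne y_0$ for every $n\ge1$. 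Hence $x\notin\overline{T}_\beta(\overline{O})$, contradicting invariance, so $w$ is a mechanical word.

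The only genuinely delicate step is the first contradiction: one must arrange that the purely combinatorial obstruction of Proposition \ref{unbalance} translates into the sharp bound $y_j-y_i>1/\beta$, and for this it is essential both that $\overline{d}_\beta$ be order preserving and injective (so lexicographic comparisons of $\sigma^n(w)$ transfer to comparisons of the $y_n$) and that applying $\overline{T}_\beta$ scales differences of orbit points with a common leading digit exactly by $\beta$. The second assertion is comparatively soft once one observes that the skew case produces a finite orbit, which sidesteps any continuity concern about $\overline{T}_\beta$ on $\overline{O}$.
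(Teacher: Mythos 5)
Your proof is correct and follows essentially the same route as the paper's: reduce to a two-letter alphabet $\{a,a+1\}$, invoke Proposition \ref{unbalance} to produce $aua$ and $bub$ in the unbalanced case, contradict the diameter bound, conclude by Morse--Hedlund, and rule out skew words in the invariant case by their non-recurrence. The only cosmetic difference is that you convert the $aua$/$bub$ obstruction into the metric inequality $y_j-y_i>1/\beta$, whereas the paper compares the two factors lexicographically against $\overline{d}_\beta(t)=as$ and $\overline{d}_\beta(t+\frac{1}{\beta})=bs$; both are valid renderings of the same idea, and your fuller justification of the final step (finiteness of the orbit of an eventually periodic word) is a welcome elaboration of the paper's one-line remark.
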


\begin{proof}
Let $\overline{d}_\beta(t)=as$ for some $a\in A_\beta$ and $s\in A_\beta^\mathbb{N}$. Consequently, the $\overline{\beta}$-expansion of $t+\frac{1}{\beta}$ is given by $bs$, where $b=a+1$. If $\overline{d}_\beta(x)=r \in A_\beta^\mathbb{N}$, then one has $as \leq \sigma^n(r)\leq bs$ for every $n\geq0$, and therefore $\mathrm{alph}(r)=\{a,b\}$. If $r$ is unbalanced, then Proposition \ref{unbalance} guarantees a palindrome $u$ such that both $aua$ and $bub$ are factors of $r$. So $as\leq aua<bub\leq bs$ yields a contradiction, from which we conclude that $r$ is balanced, i.e., either a mechanical word or a skew word. If the $\overline{T}_\beta$-orbit closure of $x$ is invariant, then $r$ cannot be a skew word because it is non-recurrent.
\end{proof}

The next proposition prescribes the domain of $\Xi$.

\begin{prop} \label{diam+rot}
Let $\alpha>0$ and $\beta>1$.
\renewcommand{\theenumi}{\alph{enumi}}
\renewcommand{\labelenumi}{{\rm(\theenumi)}}
\begin{enumerate}
  \item If $\Delta(\alpha)\leq \beta$, then there exists a unique invariant $\overline{T}_\beta$-orbit closure such that it is contained in some $[t,t+\frac{1}{\beta}]\subset[0,1]$ and has frequency $\alpha$.

  \item If $\Delta(\alpha)> \beta$, then no invariant $\overline{T}_\beta$-orbit closure has $\alpha$ as its frequency.
\end{enumerate}

\end{prop}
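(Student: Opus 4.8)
The plan is to turn both halves into statements about the shift‑orbit closure $M_\alpha$ of $s'_{\alpha,0}$, which by Proposition \ref{LexMech}(a) is exactly the set of lower and upper mechanical words of slope $\alpha$; it is $\sigma$-invariant, and by Proposition \ref{LexMech}(c) its lexicographic maximum and minimum are $s'_{\alpha,0}$ and $s_{\alpha,0}$. First I would record the bookkeeping I need: (i) $\overline d_\beta\circ\overline T_\beta=\sigma\circ\overline d_\beta$ and $x\mapsto\overline d_\beta(x)$ is strictly increasing, so the order of reals matches the lexicographic order on admissible sequences; (ii) hence for every $x$ the $\overline T_\beta$-orbit closure of $x$ equals $\{(v)_\beta:v\in\Omega\}$, where $\Omega$ is the shift-orbit closure of $\overline d_\beta(x)$, and it is $\overline T_\beta$-invariant iff $\Omega$ is $\sigma$-invariant (using compactness of $A_\beta^\mathbb{N}$ and continuity of $(\cdot)_\beta$); (iii) a non-finite $s\in A_\beta^\mathbb N$ equals $\overline d_\beta(x)$ for some $x\in[0,1]$ exactly when $\sigma^n s\le\overline d_\beta(1)$ for all $n\ge0$ (from Propositions \ref{lexParry} and \ref{betabar}); and (iv) $\beta\mapsto\overline d_\beta(1)$ is strictly increasing (from \cite{Kw-devil}), so $\Delta(\alpha)\le\beta\iff s'_{\alpha,0}\le\overline d_\beta(1)$.

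For (a) I would take $\Delta(\alpha)\le\beta$, so $s'_{\alpha,0}\le\overline d_\beta(1)$; since $\sigma^n s'_{\alpha,0}\in M_\alpha$ we get $\sigma^n s'_{\alpha,0}\le s'_{\alpha,0}\le\overline d_\beta(1)$, hence by (iii) there is $x$ with $\overline d_\beta(x)=s'_{\alpha,0}$, and its orbit closure is $K:=(\cdot)_\beta(M_\alpha)$, which is invariant because $\sigma(M_\alpha)=M_\alpha$. Every $v\in M_\alpha$ is mechanical of slope $\alpha$, hence has digit-frequency $\alpha$, so $K$ has frequency $\alpha$; its extreme values are $(s'_{\alpha,0})_\beta$ and $(s_{\alpha,0})_\beta$, and since $s'_{\alpha,0},s_{\alpha,0}$ differ only in their first letter (irrational $\alpha$) or are the conjugate period words $(bz_{p,q}a)^\omega$, $(az_{p,q}b)^\omega$ (rational $\alpha=p/q$), a one-line geometric-series computation gives $(s'_{\alpha,0})_\beta-(s_{\alpha,0})_\beta\le 1/\beta$; together with $(s_{\alpha,0})_\beta\ge0$ and $(s'_{\alpha,0})_\beta\le1$ this lets me pick $t$ with $K\subseteq[t,t+\tfrac1\beta]\subseteq[0,1]$. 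For uniqueness I would take any invariant orbit closure $K'=\overline{\mathrm{Orb}(x')}$ with frequency $\alpha$ inside an interval of diameter $1/\beta$; Lemma \ref{OrbMech} forces $\overline d_\beta(x')$ to be mechanical, of slope $\alpha$ by the frequency hypothesis, so by Proposition \ref{LexMech}(a) its shift-orbit closure is again $M_\alpha$ and $K'=(\cdot)_\beta(M_\alpha)=K$.

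For (b) I would argue by contradiction: assume $\Delta(\alpha)>\beta$ but some invariant $\overline T_\beta$-orbit closure $K$ has frequency $\alpha$. As $K$ is compact, $m:=\max K\in K$, and invariance gives $\overline T_\beta^n(m)\le m$ for all $n$; pushing this through $\overline d_\beta\circ\overline T_\beta=\sigma\circ\overline d_\beta$ and (i), $w:=\overline d_\beta(m)$ is \emph{shift-maximal} ($\sigma^n w\le w$ for all $n$), has frequency $\alpha$, and satisfies $w\le\overline d_\beta(1)$. Everything then reduces to the combinatorial claim that a shift-maximal $w$ of frequency $\alpha$ satisfies $w\ge s'_{\alpha,0}$; granting it, $\overline d_\beta(1)\ge w\ge s'_{\alpha,0}=\overline d_{\Delta(\alpha)}(1)$ forces $\beta\ge\Delta(\alpha)$, a contradiction. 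To prove the claim I would assume $w<s'_{\alpha,0}$, note that shift-maximality makes $w_1$ the largest digit, that $w_1\le\lceil\alpha\rceil-1$ would make the frequency $<\alpha$ while $w_1\ge\lceil\alpha\rceil+1$ would give $w>s'_{\alpha,0}$, so $w_1=\lceil\alpha\rceil$; then show $w$ is a \emph{balanced} binary word over $\{\lceil\alpha\rceil-1,\lceil\alpha\rceil\}$, using Proposition \ref{unbalance} together with an averaging estimate that bounds the asymptotic density of the digit $\lceil\alpha\rceil$ in $w$ against its density $\{\alpha\}$ in $s'_{\alpha,0}$ (a lower density of that digit, or the factors $aua$ and $bub$ coming from an unbalancedness, would drag the digit average off $\alpha$); and finally invoke Morse--Hedlund to make $w$ mechanical or skew, necessarily of slope $\alpha$, so that shift-maximality with Proposition \ref{LexMech}(c) (mechanical case) or with the observation that the periodic tail of a skew word of slope $\alpha=p/q$ passes through $(t'_{p,q})^\omega=s'_{\alpha,0}$ (skew case) yields $w\ge s'_{\alpha,0}$, contradicting $w<s'_{\alpha,0}$.

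The main obstacle is precisely the balancedness step inside the claim: the implication ``shift-maximal with frequency \emph{exactly} $\alpha$ $\Rightarrow$ balanced'' is where the arithmetic nature of $\alpha$ must be used (the statement fails without ``exactly'', and rational $\alpha$ additionally requires handling skew words), whereas the rest is routine given facts (i)--(iv). A cleaner but essentially equivalent route would be to observe that $w<s'_{\alpha,0}$ together with shift-maximality makes every $\sigma^n w$ lie strictly below $s'_{\alpha,0}=\overline d_{\Delta(\alpha)}(1)$ — so $w$ is an admissible $\overline{\Delta(\alpha)}$-expansion — and then use that the largest digit-frequency realized in the $\overline{\Delta(\alpha)}$-shift is $\alpha$, attained only on $(\cdot)_{\Delta(\alpha)}(M_\alpha)$; but proving that extremal fact comes down to the same combinatorics.
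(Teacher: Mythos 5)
Your part (a) is correct and follows essentially the same route as the paper: existence of the orbit closure of $(s'_{\alpha,0})_\beta$ via Proposition \ref{LexMech}, the diameter bound $(s'_{\alpha,0})_\beta-(s_{\alpha,0})_\beta\leq 1/\beta$ from \eqref{Sturm-char} and \eqref{Christ-cent}, and uniqueness via Lemma \ref{OrbMech} plus Proposition \ref{LexMech}.

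Part (b) is where there is a genuine gap. You drop the hypothesis that the orbit closure lies in an interval of diameter $1/\beta$ and instead reduce everything to the claim that a shift-maximal word $w$ of frequency $\alpha$ satisfies $w\geq s'_{\alpha,0}$. That claim is a nontrivial extremal result (essentially the Bullett--Sentenac/Allouche--Glen circle of ideas), and your sketch of it does not go through as written. First, the step ``shift-maximal with frequency exactly $\alpha$ $\Rightarrow$ balanced'' is false as stated: $1100(10)^\omega$ is shift-maximal with frequency $1/2$ but contains both $11$ and $00$. So the hypothesis $w<s'_{\alpha,0}$ must enter the balancedness argument in an essential way, and the ``averaging estimate'' that is supposed to carry that weight is exactly what is missing. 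Second, even before balancedness, you have not justified that $\mathrm{alph}(w)\subseteq\{\lceil\alpha\rceil-1,\lceil\alpha\rceil\}$: you only pin down the first letter $w_1=\lceil\alpha\rceil$, but shift-maximality allows smaller digits further along (e.g.\ $(2220)^\omega$ is shift-maximal with average $3/2$ and contains the digit $0$), so Proposition \ref{unbalance} cannot yet be applied to a binary word. You flag the balancedness step yourself as ``the main obstacle,'' which is an accurate self-assessment: the proof is not complete there.

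The paper avoids all of this because statement (b) is meant to be read in the standing context of orbit closures contained in some $[t,t+\frac{1}{\beta}]$ (this is all that is needed for the well-definedness of $\Xi$). Under that hypothesis Lemma \ref{OrbMech} applies directly: the $\overline{\beta}$-expansion is a mechanical word, of slope $\alpha$ by the frequency hypothesis, so by Proposition \ref{LexMech} the orbit closure contains the point whose expansion is $s'_{\alpha,0}$; since $\Delta(\alpha)>\beta$ gives $s'_{\alpha,0}>\overline{d}_\beta(1)$, this violates Proposition \ref{lexParry}, and the proof is two lines. Either adopt that reading and argument, or, if you want the interval-free statement, you must actually prove the extremal claim rather than sketch it.
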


\begin{proof}
Suppose $\Delta(\alpha)\leq \beta$, or equivalently $s'_{\alpha,0}\leq \overline{d}_\beta(1)$. Then Proposition \ref{LexMech} shows that the $\overline{T}_\beta$-orbit closure of $(s'_{\alpha,0})_\beta$ is invariant and contained in an interval
$[(s_{\alpha,0})_\beta,(s'_{\alpha,0})_\beta]$. Moreover, its frequency is equal to $\alpha$.
Noting \eqref{Sturm-char} and \eqref{Christ-cent} one finds $(s'_{\alpha,0})_\beta -(s_{\alpha,0})_\beta \leq1/\beta$ with equality if and only if $\alpha$ is irrational. If there are two such $\overline{T}_\beta$-orbit closures with frequency $\alpha$, then Lemma \ref{OrbMech}  associates them with mechanical words with the same slope $\alpha$. By Proposition \ref{LexMech}, the two $\overline{T}_\beta$-orbit closures should coincide.

If $\Delta(\alpha)> \beta$, then $s'_{\alpha,0}> \overline{d}_\beta(1)$. Hence Lemma \ref{OrbMech} tells us that some point in such $\overline{T}_\beta$-orbit necessarily violates Proposition \ref{lexParry}.
\end{proof}

\begin{rmk}
In the previous result, the invariance is essential for the uniqueness. Let $0\leq a = b-1$ be integers and $\beta\geq b+1$ be a real number. Suppose $\overline{d}_\beta(t)=(ab)^\omega$. Then the interval $[t,t+\frac{1}{\beta}]$ contains both $\overline{T}_\beta$-orbits of $((ab)^\omega)_\beta$ and $(b(ba)^\omega)_\beta$, which have a common frequency $a+\frac{1}{2}$. One may verify that $b(ba)^\omega$ is a skew word.
\end{rmk}

We recall that the function $\Xi(\alpha,\beta)$ is defined by the supremum of the $\overline{T}_\beta$-orbit contained in some $[t,t+\frac{1}{\beta}]\subset[0,1]$ and having frequency $\alpha$. Proposition \ref{diam+rot} now tells us that for $\alpha$ and $\beta$ satisfying $\Delta(\alpha)\leq \beta$, the function $\Xi(\alpha,\beta)$ is well-defined,
and that if $\Delta(\alpha)> \beta$ then $\Xi(\alpha,\beta)$ is not defined at all. A more detailed description of the maximal domain of $\Xi$ will be presented in Theorem \ref{DomainCond}.

Owing to Lemma \ref{OrbMech}, we can alternatively define $\Xi$ by
$$\Xi(\alpha,\beta)=(s'_{\alpha,0})_\beta.$$
Apparently, this is more tangible in many cases.

Let $\beta>1$ be fixed. Suppose $\Delta(\alpha)\leq \beta$. If $\alpha$ is irrational, then the invariant $\overline{T}_\beta$-orbit closure of $(s'_{\alpha,0})_\beta$ is contained in $[t,t+\frac{1}{\beta}]$ for a unique $t=(s_{\alpha,0})_\beta$.
On the other hand, if $\alpha$ is rational, then that orbit closure is contained in $[t,t+\frac{1}{\beta}]$ for $(s'_{\alpha,0})_\beta- \frac{1}{\beta}\leq t \leq (s_{\alpha,0})_\beta$.

For the other direction, for a fixed $\beta>1$, let an interval $[t,t+\frac{1}{\beta}]\subset [0,1]$ be given. Then we will see next that the interval actually contains a unique invariant $\overline{T}_\beta$-orbit closure. We prove uniqueness first.

\begin{prop}\label{uniTb}
Let $\beta>1$ be fixed, and $I=[t,t+\frac{1}{\beta}]\subset [0,1]$ be given. Then there exists
at most one invariant $\overline{T}_\beta$-orbit closure contained in $I$.

\end{prop}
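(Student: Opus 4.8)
The plan is to argue by contradiction: suppose $I=[t,t+\tfrac1\beta]$ contains two distinct invariant $\overline{T}_\beta$-orbit closures, say $O_1$ and $O_2$. By Lemma \ref{OrbMech}, each $O_i$ corresponds to a mechanical word, i.e.\ there are points $x_i\in O_i$ whose $\overline{\beta}$-expansions $\overline{d}_\beta(x_i)$ are mechanical words, and the whole orbit closure $O_i$ consists of all points whose expansions lie in the shift-orbit closure of $\overline{d}_\beta(x_i)$. Denote by $\alpha_i$ the slope (= frequency) of $O_i$. The first step is to pin down the left endpoint: since $O_i\subset[t,t+\tfrac1\beta]$, writing $\overline{d}_\beta(t)=as$ with $b=a+1$ forces every expansion appearing in $O_i$ to lie lexicographically between $as$ and $bs$, exactly as in the proof of Lemma \ref{OrbMech}; in particular $\mathrm{alph}$ of each such expansion is $\{a,b\}$, so $\alpha_i\in[a,b]$, and the two slopes $\alpha_1,\alpha_2$ share the same ceiling $b$.

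The core of the argument is then to show $\alpha_1=\alpha_2$ and, from that, $O_1=O_2$. For the second implication, once the slopes agree, Proposition \ref{LexMech}(a) says the shift-orbit closures of $s_{\alpha,\rho}$ and $s'_{\alpha,\rho}$ all equal the same set (the lower and upper mechanical words of that slope), so the two invariant orbit closures coincide — this is exactly the uniqueness step already used inside the proof of Proposition \ref{diam+rot}. So the real content is $\alpha_1=\alpha_2$. Suppose not, say $\alpha_1<\alpha_2$. The idea is to extract from the two orbit closures a pair of factors witnessing unbalancedness relative to the common ``frame'' $as$, $bs$. Concretely: since $\alpha_1<\alpha_2$, for suitably long lengths the number of $b$'s in a length-$n$ factor of an expansion from $O_2$ strictly exceeds that of one from $O_1$ by at least $2$; by choosing the factors to start at the right positions (using that both orbit closures contain the characteristic/mechanical words with all intercepts, hence factors beginning with any admissible prefix), one produces a palindrome $u$ with $aua$ a factor of an $O_1$-expansion and $bub$ a factor of an $O_2$-expansion — or more directly, one shows the $\overline{\beta}$-expansion of a well-chosen point forces $as\le aua<bub\le bs$, contradicting $O_1,O_2\subset I$. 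Alternatively, and perhaps more cleanly, I would use the interval arithmetic directly: if $\alpha_1<\alpha_2$ then $\sup O_1<\sup O_2$ could still both sit in $I$, so the contradiction must come from the \emph{lower} ends — the minimal mechanical words $(s_{\alpha_i,0})_\beta$ — together with the fixed window length $\tfrac1\beta=(bs)_\beta-(as)_\beta$; comparing $(s'_{\alpha_1,0})_\beta$ and $(s_{\alpha_2,0})_\beta$ and invoking the ordering $s_{\alpha_1,0}<s_{\alpha_2,0}$ (strict since $\alpha_1<\alpha_2$) against the constraint that each orbit fills out a window of the \emph{same} width $\tfrac1\beta$ anchored inside $I$ yields that both orbits must actually have the same left endpoint $t'$ with $\overline{d}_\beta(t')$ lower mechanical, and hence the same slope.

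The step I expect to be the main obstacle is making the passage ``two distinct slopes $\Rightarrow$ unbalanced pair of factors inside the common frame'' fully rigorous, because it requires knowing that each invariant orbit closure contains factors with prescribed prefixes (so that the two witnessing factors can be aligned to start at comparable positions). This should follow from Proposition \ref{LexMech}(a) — the shift-orbit closure of a mechanical word of slope $\alpha$ contains $s_{\alpha,\rho}$ for \emph{every} intercept $\rho$, hence every factor of every such word — combined with the balanced/unbalanced dichotomy of Propositions \ref{unbalance} and \ref{palin}. Once that combinatorial lemma is in hand, the contradiction ``$as\le aua<bub\le bs$'' is immediate, exactly mirroring the argument in Lemma \ref{OrbMech}, and the proof closes.
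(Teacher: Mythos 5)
Your setup coincides with the paper's: reduce by Lemma \ref{OrbMech} to orbit closures of mechanical words, dispose of the equal-slope case via Proposition \ref{LexMech}, and seek a contradiction from $\alpha_1<\alpha_2$. But the heart of the proposition --- that orbit closures of two \emph{different} slopes cannot both sit inside an interval of length $1/\beta$ --- is not actually established in either of your two sketches, and this is exactly where the paper does its real work. The paper proves the concrete inequality $(s'_{\alpha_2,0})_\beta-(s_{\alpha_1,0})_\beta>1/\beta$; the irrational-$\alpha_2$ case is immediate, but the rational case $\alpha_2=p/q$ requires showing $az_{p,q}a(bz_{p,q}a)^\omega>s_{\alpha_1,0}$ by a careful lexicographic argument (assuming the contrary forces $s_{\alpha_1,0}$ to begin with $az_{p,q}a$, whence a shifted word of slope $\alpha_1$ would lexicographically dominate $s'_{\alpha_2,0}$, contradicting $\alpha_1<\alpha_2$).

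Your second, ``interval arithmetic'' sketch fails precisely there: you invoke ``the constraint that each orbit fills out a window of the same width $\tfrac1\beta$,'' but for rational slope $p/q$ the orbit closure has diameter $\frac{\beta^{q-1}-1}{\beta^q-1}<\tfrac1\beta$ (as the paper computes in Theorem \ref{MainMotiv}), so two orbit closures with distinct left endpoints are not a priori excluded and the conclusion ``same left endpoint, hence same slope'' does not follow. Your first sketch --- producing a palindrome $u$ with $aua$ a factor of an $O_1$-expansion and $bub$ of an $O_2$-expansion, then deriving $ub\cdots\le s\le ua\cdots$ --- would be a genuinely different and arguably cleaner route, but it rests on a two-word version of the Coven--Hedlund lemma (if $|u|=|v|$ and $|u|_b\ge|v|_b+2$ then a palindrome $w$ exists with $awa$ a factor of $v$ and $bwb$ a factor of $u$), which is strictly stronger than Proposition \ref{unbalance} as stated (that concerns factors of a \emph{single} word) and which you assert rather than prove. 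As you yourself flag, this passage is the obstacle; until it is supplied, the proof has a genuine gap.
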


\begin{proof}
Suppose that $I$ contains two different invariant $\overline{T}_\beta$-orbit closures. By Lemma \ref{OrbMech}, they are orbit closures of some points whose $\overline{\beta}$-expansions are mechanical words. If the mechanical words have the same slope, then Proposition \ref{LexMech} shows that the two orbit closures coincide. So assume that $I$ contains $\overline{T}_\beta$-orbit closures of $(s'_{\alpha_1,0})_\beta$ and $(s'_{\alpha_2,0})_\beta$ for some $0<\alpha_1<\alpha_2$.

We claim that $(s'_{\alpha_2,0})_\beta -(s_{\alpha_1,0})_\beta >1/\beta$, which implies that not both
$\overline{T}_\beta$-orbit closures of $(s'_{\alpha_1,0})_\beta$ and $(s'_{\alpha_2,0})_\beta$  can be contained in $I$.
Indeed, if $\alpha_2$ is irrational, then $(s'_{\alpha_2,0})_\beta -(s_{\alpha_2,0})_\beta = 1/\beta$. Since $s_{\alpha_2,0}>s_{\alpha_1,0}$ and $s_{\alpha_2,0}$ is aperiodic, one has $(s_{\alpha_2,0})_\beta >(s_{\alpha_1,0})_\beta$. Suppose that $\alpha_2=p/q$ is rational, and so $s'_{\alpha_2,0}=(bz_{p,q}a)^\omega$, where $a+1=b=\lceil\alpha_2\rceil$. Then the $\overline{\beta}$-expansion of $(s'_{\alpha_2,0})_\beta-1/\beta$ is equal to  $az_{p,q}a(bz_{p,q}a)^\omega$.
Now it suffices to show that $az_{p,q}a(bz_{p,q}a)^\omega > s_{\alpha_1,0}$. On the contrary, we assume that $az_{p,q}a(bz_{p,q}a)^\omega < s_{\alpha_1,0}$. Note that the equality never holds. From $\alpha_1<\alpha_2$ it follows that
$$az_{p,q}a(bz_{p,q}a)^\omega < s_{\alpha_1,0}<az_{p,q}b(az_{p,q}b)^\omega=s_{\alpha_2,0}.$$
One finds then that $s_{\alpha_1,0}$ begins with $az_{p,q}a$, say, $s_{\alpha_1,0}=az_{p,q}a s$ for some $s\in\{a,b\}^\mathbb{N}$. Therefore, we have
$$s'_{\alpha_2,0}=(bz_{p,q}a)^\omega < s=s_{\alpha_1,\{\alpha_1 q\}} <s'_{\alpha_1,0},$$
which contradicts $\alpha_1<\alpha_2$.
\end{proof}

The rest of this section is devoted to finding the invariant $\overline{T}_\beta$-orbit closure contained in a given interval $[t,t+\frac{1}{\beta}]\subset [0,1]$. We also compute its frequency explicitly and effectively. To this end, we need a closer look at Christoffel words. This view is a slight generalization of a recent work done by Allouche and Glen \cite{AG} --- from binary alphabet to finite alphabet. As a byproduct, we obtain a considerably simpler proof of \cite[Lemma 27]{AG}.

\begin{lem}[\cite{AG}]\label{CentPref}
Suppose that $w$ is a central word with $\mathrm{alph}(w)=\{a,b\}$. Let $p$ and $q$ be the
unique pair of central words satisfying $w=pabq=qbap$. Then $wabq$ (resp. $wbap$) is a prefix of $(qba)^\omega$ (resp. $(pab)^\omega$).
\end{lem}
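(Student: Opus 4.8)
The plan is to proceed by induction on the length of $w$ (equivalently, on the number of letters in the word $v$ with $w = \mathrm{Pal}(v)$), using the recursive structure of central words provided by Proposition \ref{palin}(c). The base case is $w = \varepsilon$, where $p = q = \varepsilon$, and the claim reads: $ab$ is a prefix of $(ba)^\omega$ — false as stated, so more precisely the base case is when one of $p, q$ is empty, say $q = \varepsilon$ and $w = p = a^k$ for some $k \geq 0$ (the case of a power of a single letter from Proposition \ref{palin}(a)); then $w = a^k$, $q = \varepsilon$, and $wabq = a^{k+1}b$ must be a prefix of $(qba)^\omega = (ba)^\omega$, which forces $k = 0$... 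This shows I should instead organize the induction so that the inductive step produces longer central words from shorter ones via the maps $\psi_a, \psi_b$ (or equivalently via palindromic closure), and verify the two claimed prefix relations are preserved. By symmetry (exchanging $a \leftrightarrow b$, which swaps the roles of $p$ and $q$ and the two assertions), it suffices to prove that $wabq$ is a prefix of $(qba)^\omega$.

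The key combinatorial input is that $w = pabq = qbap$ has $|q| + 2$ as a period (Proposition \ref{palin}(c)). First I would record that $wabq$ has length $|w| + 2 + |q| = |pabq| + 2 + |q|$, while $(qba)^\omega$ restricted to this length is $(qba)^{m}$-prefix for suitable $m$; since $|qba| = |q| + 2$ is a period of $w$ and $w$ begins with $q$ (indeed $w = qbap$), the word $w$ is a prefix of $(qba)^\omega$ of length $|w|$. The remaining task is to check that the next $|q| + 2$ letters after $w$ in $wabq$, namely $abq$ followed by nothing — wait, $|abq| = |q| + 2$ exactly — agree with the next period block $qba$ of $(qba)^\omega$. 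That is, I must show $abq = $ the length-$(|q|+2)$ factor of $(qba)^\omega$ starting at position $|w|$. Since $|w| = |qbap| \equiv |p| + 2 \equiv |p| \pmod{|q|+2}$ is not in general $0$, this requires knowing how the periodic word $(qba)^\omega$ looks shifted by $|p|$; here is where I would invoke that $p$ is itself central with $w = pabq$, so $p$ is a prefix of $w$ and hence of $(qba)^\omega$, and use the interplay of the two periods $|p|+2$ and $|q|+2$ of $w$ (relatively prime, by Proposition \ref{palin}(c)) together with the Fine–Wilf theorem to pin down enough of $(qba)^\omega$.

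Concretely, I expect the cleanest route is: apply Fine–Wilf to $w$, which has coprime periods $|p|+2$ and $|q|+2$ and length $|w| = |p| + |q| + 2 \geq (|p|+2) + (|q|+2) - \gcd = |p| + |q| + 2$ — so $w$ is exactly long enough for Fine–Wilf to apply (equality case). This tells us $w$ extends \emph{uniquely} to a word that is periodic with period $\gcd(|p|+2,|q|+2) = 1$ only in trivial cases, so instead I use Fine–Wilf more carefully: any word of length $\geq |p|+|q|+2$ with periods $|p|+2$ and $|q|+2$ has period $1$; since $w$ has length exactly $|p|+|q|+2$ and is generally not unary, this says $|w| = |p|+|q|+1$ is the true threshold and $w$ sits right at the boundary. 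The upshot for us: the bi-infinite (or long enough finite) word determined by period $|q|+2$ agreeing with $w$ on its length is forced, and $abq$ is the unique length-$(|q|+2)$ continuation making the result have period $|p|+2$ as well — which it must, since $wabq$ should be compatible with being a prefix of a Christoffel-type word. I would then simply verify $wabq$ has period $|p|+2$ (because $wab = pabq \cdot ab$ and the period structure of $w$ propagates) and has period $|q|+2$ on a long enough prefix, and conclude via Fine–Wilf that its length-$(|q|+2)$-periodic extension is $(qba)^\omega$, giving the prefix claim.

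The main obstacle I anticipate is bookkeeping the period arithmetic at the ``seam'': showing that appending $abq$ to $w$ genuinely continues the period-$(|q|+2)$ pattern, i.e. that the letter at each position $i$ with $|w| \leq i < |w| + |q| + 2$ equals the letter at position $i - (|q|+2)$, which lies in $w$. This amounts to the identity $abq = $ (the suffix of $w$ of length $|q|+2$)$= $ (last two letters of $w$, which are $ap$'s last two... ) — here one must use $w = qbap$ so the suffix of $w$ of length $|q|+2$ is the length-$(|q|+2)$ suffix of $qbap$, and compare it with $abq$; this equality is really the palindrome identity $\widetilde{qbap} = pabq$ reversed, i.e. it follows from $w$ being a palindrome ($w = \tilde w$ since $w$ is central). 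So the ``hard part'' dissolves into: $w$ palindrome $\Rightarrow$ suffix of $w$ of length $|q|+2$ equals reversal of prefix of length $|q|+2$ equals $\widetilde{qba} = \widetilde{(qba)}$; and one checks $\widetilde{qba} $ vs. $abq$ using that $q$ is a palindrome. Assembling these — $q$ palindrome, $w$ palindrome, the two coprime periods, Fine–Wilf at the threshold — yields the result; the symmetric statement for $wbap$ and $(pab)^\omega$ follows by the $a \leftrightarrow b$ symmetry noted above.
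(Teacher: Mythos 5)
Your final argument is correct, but it reaches the conclusion by a different and more laborious route than the paper, and it is wrapped in two detours you should discard. The paper's proof is a one-line identity: since $w=qbap$ and $w=pabq$, one has $wabq = qba\cdot pabq = qba\cdot qbap=(qba)^2p$, so the claim reduces to showing that $p$ is a prefix of $(qba)^\omega$, which is immediate because $p$ is a prefix of $w$ and $w=qbap$ begins with $qba$ and has period $|q|+2$. Your approach instead verifies the period across the seam: $w$ is a prefix of $(qba)^\omega$ (same period observation), and appending $abq$ continues the period because the length-$(|q|+2)$ suffix of the palindrome $w$ is the reversal of its prefix $qba$, namely $ab\tilde q=abq$ since $q$ is a palindrome. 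That is a complete and valid argument, essentially trading the paper's algebraic identity for a palindromic ``seam'' computation; it buys nothing extra but is perfectly sound. What you should cut: (i) the induction on $|w|$, which you correctly abandon after the base case misfires (the lemma only concerns $w$ that are not powers of a single letter, so there is nothing to induct on); and (ii) the entire Fine--Wilf discussion, which is both unnecessary and misstated --- the Fine--Wilf threshold for periods $|p|+2$ and $|q|+2$ with $\gcd=1$ is $|p|+|q|+3$, not $|p|+|q|+2$, and central words sit strictly \emph{below} that threshold (that is precisely why they need not be unary), so no conclusion of the form ``has period $1$'' can be drawn for $w$. Your final paragraph still name-checks ``the two coprime periods, Fine--Wilf at the threshold'' as ingredients; they are not used anywhere in the argument that actually works, and leaving them in obscures the fact that only the period $|q|+2$ of $w$ and the palindromicity of $w$ and $q$ are needed.
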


\begin{proof}
We prove only the assertion for $wabq$. The case of $wbap$ is a symmetric argument.
One derives $wabq=qbapabq=(qba)^2 p$. Therefore, it suffices to show that $p$ is a prefix of $(qba)^\omega$.
If $|p|\leq|qba|$, then it is obvious. If $|p|>|qba|$, then the claim follows from the fact that $w=pabq=qbap$ has a period $|q|+2$.
\end{proof}

Given an interval $[t,t+\frac{1}{\beta}]\subset [0,1]$, the following two theorems determine the unique invariant $\overline{T}_\beta$-orbit closure contained in it, and its frequency. Thanks to the uniqueness in Proposition \ref{uniTb}, all we have to prove is to check whether the given orbit is actually contained in $[t,t+\frac{1}{\beta}]$.

\begin{thm}\label{MeagerCase}
Given $\beta>1$, let $0\leq t \leq 1-\frac{1}{\beta}$ and $\overline{d}_\beta(t)=as$ with a nonnegative integer $a=:b-1$. The interval $I:=[t,t+\frac{1}{\beta}]$ contains a unique invariant $\overline{T}_\beta$-orbit closure as follows.
\renewcommand{\theenumi}{\alph{enumi}}
\renewcommand{\labelenumi}{{\rm(\theenumi)}}
\begin{enumerate}
  \item If $s\leq a^\omega$, then $I$ contains the orbit closure of $(a^\omega)_\beta$, whose frequency is $a$.\label{MeagerCase1}
  \item If $s\geq b^\omega$, then $I$ contains the orbit closure of $(b^\omega)_\beta$, whose frequency is $b$.\label{MeagerCase2}
  \item If $as=s_{\alpha,0}$ or $bs=s'_{\alpha,0}$ for some $\alpha\in[a,b]$, then $I$ contains the orbit closure of $(s'_{\alpha,0})_\beta$. So the frequency is $\alpha$.\label{MeagerCase3}
\end{enumerate}

\end{thm}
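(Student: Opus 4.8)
The strategy is dictated by Proposition \ref{uniTb}: since $I$ contains at most one invariant $\overline{T}_\beta$-orbit closure, in each of the three cases it suffices to exhibit one invariant $\overline{T}_\beta$-orbit closure lying in $I$ and to compute its frequency. The containment will always be read off combinatorially: $\overline{d}_\beta$ is order-preserving (by Proposition \ref{betabar} together with the order property of $d_\beta$), so, as $\overline{d}_\beta(t)=as$ and $\overline{d}_\beta(t+\tfrac{1}{\beta})=bs$, a point $x$ lies in $I$ precisely when $as\le\overline{d}_\beta(x)\le bs$ lexicographically. I will also use two elementary consequences of $\overline{d}_\beta(t)=as$: from $\lceil\beta t\rceil-1=a$ one has $a<\beta t\le b$, hence $t\le b/\beta$; and since $\beta t\le\beta-1$, also $a<\beta-1$. (Here $t>0$, so $\overline{d}_\beta(t)$ is defined.)

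In parts (a) and (b) the orbit closure is a single fixed point. For (a): if $a=0$ then $s\le a^\omega=0^\omega$ forces $s=0^\omega$ and $t=0$, excluded; so $a\ge1$, and then $(a^\omega)_\beta=\tfrac{a}{\beta-1}\in(0,1)$ satisfies $\overline{T}_\beta\big(\tfrac{a}{\beta-1}\big)=\big\langle a+\tfrac{a}{\beta-1}\big\rangle=\tfrac{a}{\beta-1}$, so it is a $\overline{T}_\beta$-fixed point with $\overline{d}_\beta((a^\omega)_\beta)=a^\omega$. The hypothesis $s\le a^\omega$ gives $as\le a^\omega$, and $a^\omega<bs$ because $a<b$; hence $\{(a^\omega)_\beta\}\subseteq I$, and its frequency is plainly $a$. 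Part (b) is the mirror image, but one must first check $b\le\beta-1$, so that $(b^\omega)_\beta=\tfrac{b}{\beta-1}$ lies in $[0,1]$. Here I argue from $\overline{T}_\beta(t)=(s)_\beta\in(0,1]$: if $s=b^\omega$ then $\tfrac{b}{\beta-1}=(s)_\beta\le1$ at once, while if $s>b^\omega$ one writes $s=b^{k}s_{k+1}s_{k+2}\cdots$ with $s_{k+1}\ge b+1$ for some $k\ge0$ and estimates
$$1\ \ge\ (s)_\beta\ \ge\ \sum_{i=1}^{k}\frac{b}{\beta^{i}}+\frac{b+1}{\beta^{k+1}}\ =\ \frac{b}{\beta-1}+\frac{\beta-1-b}{(\beta-1)\beta^{k+1}},$$
which rearranges to $(\beta-1-b)(\beta^{k+1}-1)\ge0$, forcing $b\le\beta-1$ since $\beta^{k+1}>1$. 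After that, (b) proceeds as (a): $(b^\omega)_\beta$ is a $\overline{T}_\beta$-fixed point with expansion $b^\omega$, $as<b^\omega$ because $a<b$, and $b^\omega\le bs$ because $s\ge b^\omega$; its frequency is $b$.

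For part (c), observe that $\alpha=a$ forces $s=a^\omega$ (case (a)) and $\alpha=b$ forces $s=b^\omega$ (case (b)), so one may assume $a<\alpha<b$, whence $\mathrm{alph}(s_{\alpha,0})=\mathrm{alph}(s'_{\alpha,0})=\{a,b\}$. The crux is the lexicographic pair $as\le s_{\alpha,0}$ and $s'_{\alpha,0}\le bs$. In the sub-case $as=s_{\alpha,0}$ the first is an equality and $s=\sigma(s_{\alpha,0})$; if $\alpha$ is irrational then \eqref{Sturm-char} gives $bs=bc_\alpha=s'_{\alpha,0}$, and if $\alpha=p/q$ is rational then by \eqref{Christ-cent} the words $bs=b\,\sigma\big((az_{p,q}b)^\omega\big)$ and $s'_{\alpha,0}=(bz_{p,q}a)^\omega$ share the prefix $bz_{p,q}$ and then carry the letters $b$ and $a$ respectively, so $s'_{\alpha,0}<bs$. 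The sub-case $bs=s'_{\alpha,0}$ is symmetric (now $s=\sigma(s'_{\alpha,0})$ and the second inequality is the equality; in the rational case $as=a\,\sigma\big((bz_{p,q}a)^\omega\big)$ and $s_{\alpha,0}=(az_{p,q}b)^\omega$ share the prefix $az_{p,q}$ and then differ, so $as<s_{\alpha,0}$). Granting the pair, order-preservation gives $bs=\overline{d}_\beta(t+\tfrac{1}{\beta})\le\overline{d}_\beta(1)$, hence $s'_{\alpha,0}\le bs\le\overline{d}_\beta(1)$, i.e.\ $\Delta(\alpha)\le\beta$; Proposition \ref{diam+rot}(a) and its proof then furnish an invariant $\overline{T}_\beta$-orbit closure $O$ of frequency $\alpha$, namely the orbit closure of $(s'_{\alpha,0})_\beta$, satisfying $O\subseteq[(s_{\alpha,0})_\beta,(s'_{\alpha,0})_\beta]$. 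Using order-preservation once more, the pair yields $(as)_\beta\le(s_{\alpha,0})_\beta$ and $(s'_{\alpha,0})_\beta\le(bs)_\beta$, so $O\subseteq[(s_{\alpha,0})_\beta,(s'_{\alpha,0})_\beta]\subseteq[t,t+\tfrac{1}{\beta}]=I$, and by Proposition \ref{uniTb} this is the unique invariant orbit closure in $I$.

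The main technical point is the estimate $b\le\beta-1$ in part (b) --- the only place a genuine inequality, rather than a lexicographic comparison, has to be carried out. A secondary delicate point is the rational-slope bookkeeping in part (c), where one must correctly locate, in each sub-case, the first position at which the two relevant purely periodic words disagree; the explicit descriptions \eqref{Christ-cent} of the lower and upper Christoffel words are exactly what make this comparison transparent. Finally, I would state the order-preservation of $\overline{d}_\beta$ explicitly at the outset, since that is what converts the containment $O\subseteq I$ into the combinatorial conditions on $\overline{\beta}$-expansions used throughout.
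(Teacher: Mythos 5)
Your proof is correct and follows the same route the paper intends: the paper's own proof is only the two-line remark that (a) and (b) are ``easy to check'' and (c) follows from Proposition \ref{LexMech}, relying (as you do) on Proposition \ref{uniTb} for uniqueness so that only containment of the exhibited orbit closure in $I$ needs verification. Your write-up simply supplies the details the paper omits --- the fixed-point computations, the bound $b\le\beta-1$, and the Christoffel-word comparisons --- all of which check out.
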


\begin{proof}
\eqref{MeagerCase1} and \eqref{MeagerCase2} are easy to check. \eqref{MeagerCase3} is a consequence of Proposition \ref{LexMech}.
\end{proof}

The other cases excluded in Theorem \ref{MeagerCase} are captured by the next one. The proof of the corresponding theorem in \cite{AG} also works here with minor but careful modifications.

\begin{thm}\label{GenericCase}
In addition to the assumption in Theorem \ref{MeagerCase}, suppose that $s$ satisfies none of the above three cases. Let $u$ be the longest central prefix of $s$ over $\{a,b\}$. The interval $I:=[t,t+\frac{1}{\beta}]$ contains a unique invariant $\overline{T}_\beta$-orbit closure as follows.
\renewcommand{\theenumi}{\alph{enumi}}
\renewcommand{\theenumii}{\roman{enumii}}
\renewcommand{\labelenumi}{{\rm(\theenumi)}}
\renewcommand{\labelenumii}{{\rm(\theenumii)}}
\begin{enumerate}
  \item Suppose $u=a^k$ for some $k\geq1$, and let $s=a^k vs'$ where $v\in A_\beta^{k+1}$ and $s'\in A_\beta^\mathbb{N}$. \label{GenericCase1}

  \begin{enumerate}
    \item If $v<a^{k+1}$, then $s$ falls into Case (\ref{MeagerCase1}) of Theorem \ref{MeagerCase}. \label{GenericCase1-a}
    \item If $a^{k+1}<v<ba^k$, then $I$ contains the orbit closure of $((ba^{k+1})^\omega)_\beta$, whose frequency is $a+\frac{1}{k+2}$. \label{GenericCase1-b}
    \item If $v>ba^k$, then $I$ contains the orbit closure of $((ba^k)^\omega)_\beta$, whose frequency is $a+\frac{1}{k+1}$. \label{GenericCase1-c}
  \end{enumerate}

  \item Suppose $u=b^k$ for some $k\geq1$, and let $s=b^k vs'$ where $v\in A_\beta^{k+1}$ and $s'\in A_\beta^\mathbb{N}$. \label{GenericCase2}
  \begin{enumerate}
    \item If $v>b^{k+1}$, then $s$ falls into Case (\ref{MeagerCase2}) of Theorem \ref{MeagerCase}. \label{GenericCase2-a}
    \item If $ab^k<v<b^{k+1}$, then $I$ contains the orbit closure of $((b^{k+1}a)^\omega)_\beta$, whose frequency is $b-\frac{1}{k+2}$. \label{GenericCase2-b}
    \item If $v<ab^k$, then $I$ contains the orbit closure of $((b^k a)^\omega)_\beta$, whose frequency is $b-\frac{1}{k+1}$. \label{GenericCase2-c}
  \end{enumerate}

  \item Suppose that $p$ and $q$ are the
unique pair of central words satisfying $u=pabq=qbap$. Let $s=uxyvs'$ where $x,y\in A_\beta$, $v\in A_\beta^{|u|+2}$ and $s'\in A_\beta^\mathbb{N}$. \label{GenericCase3}
  \begin{enumerate}
    \item Either if $xy=ab$ and $v>uab$, or if $xy=ba$ and $v<uba$, then $I$ contains the orbit closure of $((bua)^\omega)_\beta$, whose frequency is $a+\frac{|u|_b+1}{|u|+2}$. \label{GenericCase3-a}
    \item Either if $xy=ab$ and $v<uab$, or if $xy\leq aa$, then $I$ contains the orbit closure of $((bqa)^\omega)_\beta$, whose frequency is $a+\frac{|q|_b+1}{|q|+2}$. \label{GenericCase3-b}
    \item Either if $xy=ba$ and $v>uba$, or if $xy\geq bb$, then $I$ contains the orbit closure of $((bpa)^\omega)_\beta$, whose frequency is $a+\frac{|p|_b+1}{|p|+2}$. \label{GenericCase3-c}
  \end{enumerate}
\end{enumerate}

\end{thm}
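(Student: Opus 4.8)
The plan is to lean on the uniqueness already established in Proposition~\ref{uniTb}: $I$ contains at most one invariant $\overline{T}_\beta$-orbit closure, so in every case it is enough to exhibit the candidate and check that it lies in $I$. Since $0\le t\le 1-\tfrac1\beta$ forces $\overline{T}_\beta(t)=\overline{T}_\beta(t+\tfrac1\beta)$, we have $\overline{d}_\beta(t)=as$ and $\overline{d}_\beta(t+\tfrac1\beta)=bs$; and because $s'_{\alpha,0}\le bs\le\overline{d}_\beta(1)$ in every case below, one has $\Delta(\alpha)\le\beta$, so the mechanical words in play are genuine $\overline{\beta}$-expansions and the order on $[0,1]$ agrees with the lexicographic order. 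Writing $\alpha$ for the claimed frequency and $w$ for the central word with $s_{\alpha,0}=(awb)^\omega$, $s'_{\alpha,0}=(bwa)^\omega$ (so $awb,\,bwa$ are the lower and upper Christoffel words on $w$; here $w$ is $a^k,\ a^{k-1},\ b^k,\ b^{k-1},\ u,\ q$ or $p$ according to the subcase), the orbit closure of $(s'_{\alpha,0})_\beta$ is invariant — it is the set of all lower and upper mechanical words of slope $\alpha$ read through $(\cdot)_\beta$, which is shift-invariant and closed — and it has frequency $\alpha$ by the Example of Section~2; and by Proposition~\ref{LexMech} it is contained in $I$ precisely when
$$(wab)^\omega\ \le_{\mathrm{lex}}\ s\ \le_{\mathrm{lex}}\ (wba)^\omega ,$$
which is obtained by deleting the leading letter from $s_{\alpha,0}\ge_{\mathrm{lex}}as$ and from $s'_{\alpha,0}\le_{\mathrm{lex}}bs$. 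Thus the whole proof reduces to verifying this one inequality case by case.

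For \eqref{GenericCase1} and \eqref{GenericCase2} I would argue by inspecting short prefixes. The maximality of $u=a^k$ (respectively $u=b^k$) forbids $s$ from beginning with the next central words $a^{k+1}$ or $a^kba^k$ (respectively $b^{k+1}$ or $b^kab^k$); this fixes the first letter of $v$, and then the hypothesis comparing $v$ with $a^{k+1}$ and $ba^k$ (respectively $b^{k+1}$ and $ab^k$) makes $(wab)^\omega\le s\le(wba)^\omega$ visible within the first $2k+2$ letters, with $w$ equal to $a^k$, $a^{k-1}$, $b^k$ or $b^{k-1}$ as stated. Subcase \eqref{GenericCase1-a} and its mirror are simply the observation that $v<a^{k+1}$ forces $s\le a^\omega$, returning to Theorem~\ref{MeagerCase}\eqref{MeagerCase1}.

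Case \eqref{GenericCase3} is the substance, and here I would follow the proof of the corresponding statement in \cite{AG}, adapting it to the larger alphabet. Two facts drive it. First, Lemma~\ref{CentPref} gives that $uab$ is a prefix of $(qba)^\omega$ and $uba$ a prefix of $(pab)^\omega$; using the periods $|q|+2$ of $(qba)^\omega$ and $|p|+2$ of $(pab)^\omega$ this upgrades to $(qba)^\omega$ having $uab$ as prefix with tail $(qab)^\omega$, and $(pab)^\omega$ having $uba$ as prefix with tail $(pba)^\omega$; also $u$ begins with both $p$ and $q$ since $u=pabq=qbap$. Second, the two central words covering $u$ are $(ua)^+=uabq$ and $(ub)^+=ubap$, so maximality of $u$ says exactly that $s$ begins with neither $uabq$ nor $ubap$; equivalently, when $xy=ab$ the block $v$ does not begin with $q$, and when $xy=ba$ it does not begin with $p$. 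With these in hand, each subcase is a short comparison. For example, in \eqref{GenericCase3-c} with $xy=ba$: the bound $s\le(pba)^\omega$ is seen at position $|p|$, where $s=p\,ab\cdots$ carries $a$ but $(pba)^\omega$ carries $b$; and for $(pab)^\omega\le s$, since $s=uba\,v\,s'$ agrees with $(pab)^\omega$ on its first $|u|+2$ letters it remains to check $(pba)^\omega\le v\,s'$, which holds because $v>uba$ and $v$ does not begin with $p$ together force the first disagreement of $v$ with $(pba)^\omega$ into the leading $|p|$ positions — where $uba$ and $(pba)^\omega$ coincide — with $v$ the larger letter. When instead $xy\ge bb$ the gap already appears at position $|u|$ or $|u|+1$. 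Subcases \eqref{GenericCase3-a} and \eqref{GenericCase3-b} run the same way, and the few more exotic values of $xy$ (such as $x=a,\,y>b$ or $x=b,\,y<a$) give the same candidate as \eqref{GenericCase3-a} or \eqref{GenericCase3-c} and are disposed of by the same comparison; in every subcase the frequency is read off as the slope $a+\tfrac{|w|_b+1}{|w|+2}$ of the period word $bwa$.

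The hard part, demanding care, will be the bookkeeping in Case \eqref{GenericCase3}: extracting from Lemma~\ref{CentPref} and the Christoffel-tree structure at $u$ exactly how long $s$ must agree with the candidate periodic word before it may diverge, and arranging the two constraints on $v$ — its sign relative to $uab$ or $uba$, and its not being prefixed by $q$ or $p$ — to cooperate rather than conflict.
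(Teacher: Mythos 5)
Your proposal is correct and follows essentially the same route as the paper: reduce everything, via the uniqueness in Proposition~\ref{uniTb}, to the lexicographic containment check $s_{\alpha,0}\ge as$ and $s'_{\alpha,0}\le bs$, settle cases (a) and (b) by inspecting the first letter of $v$ forced by maximality of $u=a^k$ or $b^k$, and in case (c) combine Lemma~\ref{CentPref} with the fact that $uabq$ and $ubap$ are the next central words (so $v$ cannot begin with $q$, resp.\ $p$). Your explicit treatment of the leftover digit patterns such as $x=a,\ y>b$ is a small refinement the paper's proof leaves implicit, but it does not change the argument.
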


In every inequality above where $v$ is involved, equality never holds because $u$ is the longest central prefix of $s$ over $\{a,b\}$.

\begin{proof}
\eqref{GenericCase2} is symmetric to \eqref{GenericCase1}.

\paragraph{(\ref{GenericCase1-b}) $\mathbf{a^{k+1}<v<ba^k}$.} One sees
$$as=a^{k+1}vs'<(a^{k+1}b)^\omega$$
since $v<ba^k$. The first letter of $v$ is either $a$ or $b$. But only $b$ is possible --- otherwise, there is a longer central prefix of $s$ over $\{a,b\}$ than $u$. Accordingly,
$$bs=ba^kvs'>(ba^{k+1})^\omega.$$

\paragraph{(\ref{GenericCase1-c}) $\mathbf{v>ba^k}$.} We note
$$as=a^{k+1}vs'<(a^kb)^\omega,\ \mathrm{and\ } (ba^k)^\omega<ba^kvs'=bs.$$

\paragraph{(\ref{GenericCase3-a}) $\mathbf{xy=ab, v>uab}$, or $\mathbf{xy=ba, v<uba}$.}
Let $xy=ab$ and $v>uab$. Then
$$as=auabvs'<(aub)^\omega,\ \mathrm{and\ }
(bua)^\omega=buabuab(uab)^\omega<buabvs'.$$
The other case is symmetric.

\paragraph{(\ref{GenericCase3-b}) $\mathbf{xy=ab, v<uab}$, or $\mathbf{xy\leq aa}$.}
In either case,
$$(bqa)^\omega<bqbap=bu<buxyvs'=bs.$$
Let $xy=ab$ and $v<uab$. We claim $v<q$. Since $v<uab$ and $|v|>|q|$, all we have to do is to exclude the case where $q$ is a prefix of $v$. But by Proposition \ref{palin}, $uabq$ is central, which contradicts that $u$ is the longest. One thus finds
$$as=auabvs'<auabq<a(qba)^\omega=(aqb)^\omega,$$
where Lemma \ref{CentPref} is exploited in the last inequality.

If $xy\leq aa$, then
$$as=auxyvs'<auab<a(qba)^\omega=(aqb)^\omega.$$

\paragraph{(\ref{GenericCase3-c})} This is symmetric to \eqref{GenericCase3-b}.
\end{proof}

Let $\beta>1$ be not an integer and $\overline{d}_\beta(1)=bs$ with $b=\lfloor\beta\rfloor$. We define $\mathrm{Freq}(\beta)$, depending on $s$, by the frequencies that appear in Theorem \ref{MeagerCase} and \ref{GenericCase}. For an integer $\beta>1$, define $\mathrm{Freq}(\beta):=\beta-1$. This value is effectively computable from the infinite word $s$. Theorem \ref{MeagerCase} and \ref{GenericCase} give us the procedure to find $\mathrm{Freq}(\beta)$.

\begin{eg}
\begin{itemize}
  \item Let $\beta_1=\frac{1+\sqrt{5}}{2}$, and $\beta_2$ be the smallest Pisot number, i.e., the dominant real zero of $x^3-x-1$. Then
      $$\mathrm{Freq}(\beta_1)=\frac{1}{2}\ \ \mathrm{and\ }\ \mathrm{Freq}(\beta_2)=\frac{1}{5},$$
      because $\overline{d}_{\beta_1}(1)=(10)^\omega$ and $\overline{d}_{\beta_2}(1)=(10000)^\omega$. Both cases fall into Theorem \ref{MeagerCase} \eqref{MeagerCase3}.

  \item A direct computation shows $\overline{d}_{\pi}(1)=3 0 1 1 0 2 1 1 1 0\cdots$. Accordingly, this case falls into Theorem \ref{MeagerCase} \eqref{MeagerCase1} with $a=b-1=2$. Therefore,
      $$\mathrm{Freq}(\pi)=2.$$

  \item One has $\overline{d}_{\sqrt{7}}(1)=2 1 1 2 0 2 0 1 0 2\cdots=:bs$
      with $b=2$ and $s\in \{0,1,2\}^\mathbb{N}$. Clearly, $11$ is the longest central prefix of $s$ over $\{1,2\}$. Now we appeal to Theorem \ref{GenericCase} \eqref{GenericCase1-b}.
      Since $111<202<211$, we conclude
      $$\mathrm{Freq}(\sqrt{7})=\frac{5}{4}.$$

  \item In $\overline{d}_{3/2}(1)=1 0 1 0 0 0 0 0 1 0 0 1 0\cdots=:bs$, first note that $\overline{d}_{3/2}(1)$ is not balanced since both $101$ and $000$ appear.
      We apply Proposition \ref{FuncPal} to find the longest central prefix of $s$.
      Let $z_1=0$. Then $\mathrm{Pal}(z_1)=0$ is a prefix of $s$. Let $z_2=1$. Then $\mathrm{Pal}(z_1z_2)=010$ is also a prefix of $s$. Let $z_3=0$.
      Then $\mathrm{Pal}(z_1z_2z_3)=010010$ is no more a prefix of $s$. Hence $u=010$ is the longest central prefix of $s$. In this case, Theorem \ref{GenericCase} \eqref{GenericCase3-b} works with $p=\varepsilon, q=0, x=0, y=0$, and $v=00100$. Since $xy\leq 00$, we have
      $$\mathrm{Freq}(\frac{3}{2})=\frac{1}{3}.$$
\end{itemize}
\end{eg}

The maximal domain of $\Xi$ can be expressed in terms of $\mathrm{Freq}(\beta)$.

\begin{thm}\label{DomainCond}
\renewcommand{\theenumi}{\alph{enumi}}
\renewcommand{\labelenumi}{{\rm(\theenumi)}}
\begin{enumerate}
  \item For a fixed $\beta>1$, the function $\Xi(\alpha,\beta)$ is well defined for every $0<\alpha\leq \mathrm{Freq}(\beta)$. The bound $\mathrm{Freq}(\beta)$ is best possible. \label{DomainCond-1}

  \item For a fixed $\alpha>0$, the function $\Xi(\alpha,\beta)$ is well defined for every $\beta\geq \Delta(\alpha)$. The bound $\Delta(\alpha)$ is best possible. \label{DomainCond-2}

  \item In particular, if $\overline{d}_\beta(1)$ is a mechanical word of slope $\alpha_0$, then $\Xi(\alpha,\beta)$ is well defined as long as $0< \alpha\leq \alpha_0$. \label{DomainCond-3}
\end{enumerate}

\end{thm}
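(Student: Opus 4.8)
The plan is as follows. Part~(b) is essentially a restatement of Proposition~\ref{diam+rot}: for fixed $\alpha$ the admissible $\beta$ form the set $\{\beta:\Delta(\alpha)\le\beta\}=[\Delta(\alpha),\infty)$, whose left endpoint is admissible, so $\Delta(\alpha)$ cannot be lowered. For part~(c), the point is that $\overline{d}_\beta(1)$ is always the lexicographic maximum of its own shift-orbit closure, because $\sigma^n\overline{d}_\beta(1)=\overline{d}_\beta(\overline{T}_\beta^n(1))\le\overline{d}_\beta(1)$ for all $n$; hence if $\overline{d}_\beta(1)$ is mechanical of slope $\alpha_0$, its shift-orbit closure is the set of all mechanical words of slope $\alpha_0$, whose lexicographic maximum is $s'_{\alpha_0,0}$ by Proposition~\ref{LexMech}(c), so $\overline{d}_\beta(1)=s'_{\alpha_0,0}$ and in particular $(s'_{\alpha_0,0})_\beta=(\overline{d}_\beta(1))_\beta=1$; then for $0<\alpha\le\alpha_0$ we have $(s'_{\alpha,0})_\beta\le(s'_{\alpha_0,0})_\beta=1$ by the monotonicity recorded below, and $\Xi(\alpha,\beta)$ is defined. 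So the content lies in part~(a); the key idea there is to avoid reproducing the case analysis of Theorems~\ref{MeagerCase}--\ref{GenericCase} and instead use the uniqueness statement, Proposition~\ref{uniTb}.

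Two elementary facts are used throughout. First, $\Xi(\alpha,\beta)$ is defined if and only if $(s'_{\alpha,0})_\beta\le1$; this follows from Proposition~\ref{diam+rot} together with the facts that $\gamma\mapsto(s'_{\alpha,0})_\gamma$ is strictly decreasing on $(1,\infty)$ and that $\Delta(\alpha)$ is characterised by $(s'_{\alpha,0})_{\Delta(\alpha)}=1$. Second, $\alpha\mapsto(s'_{\alpha,0})_\beta$ and $\alpha\mapsto(s_{\alpha,0})_\beta$ are strictly increasing: summation by parts applied to the telescoping identities $\sum_{m<N}s'_{\alpha,0}(m)=\lceil\alpha N\rceil$ and $\sum_{m<N}s_{\alpha,0}(m)=\lfloor\alpha N\rfloor$ gives
\[(s'_{\alpha,0})_\beta=\Bigl(1-\tfrac1\beta\Bigr)\sum_{m\ge1}\lceil\alpha m\rceil\,\beta^{-m},\qquad(s_{\alpha,0})_\beta=\Bigl(1-\tfrac1\beta\Bigr)\sum_{m\ge1}\lfloor\alpha m\rfloor\,\beta^{-m},\]
and both right-hand sides are strictly increasing in $\alpha$ (and strictly decreasing in $\beta$).

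Now fix $\beta>1$ and set $\alpha_0:=\mathrm{Freq}(\beta)$. By Theorems~\ref{MeagerCase} and \ref{GenericCase} --- or, for integer $\beta$, directly from $\mathrm{Freq}(\beta)=\beta-1$ and $\overline{d}_\beta(1)=(\beta-1)^\omega$ --- the interval $[1-\tfrac1\beta,1]$ contains an invariant $\overline{T}_\beta$-orbit closure of frequency $\alpha_0$, and each word exhibited there is of the form $s'_{\alpha_0,0}$; thus this orbit closure $O$ is that of $(s'_{\alpha_0,0})_\beta$, and by the proof of Proposition~\ref{diam+rot}(a) it satisfies $\min O=(s_{\alpha_0,0})_\beta$ and $\max O=(s'_{\alpha_0,0})_\beta$. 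From $O\subseteq[1-\tfrac1\beta,1]$ we obtain $(s'_{\alpha_0,0})_\beta\le1$ and $(s_{\alpha_0,0})_\beta\ge1-\tfrac1\beta$. For $0<\alpha\le\alpha_0$, monotonicity gives $(s'_{\alpha,0})_\beta\le(s'_{\alpha_0,0})_\beta\le1$, so $\Xi(\alpha,\beta)$ is defined. Conversely, suppose $\alpha>\alpha_0$ and $\Xi(\alpha,\beta)$ is defined, i.e.\ $(s'_{\alpha,0})_\beta\le1$; let $O'$ be the orbit closure of $(s'_{\alpha,0})_\beta$, which (again by the proof of Proposition~\ref{diam+rot}(a)) is invariant, has frequency $\alpha$, and satisfies $O'\subseteq[(s_{\alpha,0})_\beta,(s'_{\alpha,0})_\beta]$. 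Since $(s'_{\alpha,0})_\beta\le1$ and, by strict monotonicity, $(s_{\alpha,0})_\beta>(s_{\alpha_0,0})_\beta\ge1-\tfrac1\beta$, it follows that $O'\subseteq[1-\tfrac1\beta,1]$; but $O\subseteq[1-\tfrac1\beta,1]$ too and $O\ne O'$ because their frequencies $\alpha_0$ and $\alpha$ differ, contradicting Proposition~\ref{uniTb}. Hence the maximal domain of $\Xi(\cdot,\beta)$ is exactly $(0,\mathrm{Freq}(\beta)]$, which proves~(a).

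The step I expect to require the most care is the identification of the extrema of $O$ and $O'$ with values of the form $(s_{\cdot,0})_\beta$ and $(s'_{\cdot,0})_\beta$: one needs that the $\overline{T}_\beta$-orbit closure of $(s'_{\alpha,0})_\beta$ is the image under $(\,\cdot\,)_\beta$ of the shift-orbit closure $\{s_{\alpha,\rho},s'_{\alpha,\rho}:\rho\in[0,1]\}$, that $s_{\alpha,0}$ and $s'_{\alpha,0}$ occur in it as limits of shifts, and that $(\,\cdot\,)_\beta$ is order-preserving on the genuine $\overline{\beta}$-expansions involved. All of this is already contained in, or immediate from, the proofs of Propositions~\ref{diam+rot} and \ref{uniTb}, so it introduces no new difficulty; the remaining items (the summation-by-parts identities, the monotonicity in $\gamma$, and the integer case) are routine.
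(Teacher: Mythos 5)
Your proof is correct and follows essentially the same route as the paper's: the forward direction of (a) via monotonicity of $s'_{\alpha,0}$ in the slope together with the containment of the orbit closure of $(s'_{\mathrm{Freq}(\beta),0})_\beta$ in $[1-\frac{1}{\beta},1]$, and the converse via the uniqueness of Proposition \ref{uniTb} applied to that same interval. The only cosmetic differences are that you prove (c) directly rather than as a special case of (a), and you make explicit (via Abel summation) the monotonicity and endpoint comparisons that the paper leaves implicit.
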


\begin{proof}
\eqref{DomainCond-2} was proved in Proposition \ref{diam+rot}, and \eqref{DomainCond-3} is a special case of \eqref{DomainCond-1}. We prove \eqref{DomainCond-1}. Let $0<\alpha\leq \mathrm{Freq}(\beta)=:\alpha_0$.
Then the $\overline{T}_\beta$-orbit closure of $(s'_{\alpha_0,0})_\beta$ is contained in the interval $[1-\frac{1}{\beta},1]$. Since $s'_{\alpha,0} \leq s'_{\alpha_0,0} \leq \overline{d}_\beta(1)$, Proposition \ref{LexMech} guarantees that the $\overline{T}_\beta$-orbit closure of $(s'_{\alpha,0})_\beta$ is invariant and contained in some $[t,t+\frac{1}{\beta}]\subset [0,1]$.

Suppose that $\alpha> \mathrm{Freq}(\beta)=:\alpha_0>0$ and that $\Xi(\alpha,\beta)$ is defined. Equivalently, we can say, owing to Lemma \ref{OrbMech}, that  the $\overline{T}_\beta$-orbit closure of $(s'_{\alpha,0})_\beta$ is contained in some $[t,t+\frac{1}{\beta}]\subset [0,1]$. On the other hand, the $\overline{T}_\beta$-orbit closure of $(s'_{\alpha_0,0})_\beta$ is contained in the interval $[1-\frac{1}{\beta},1]$. From $s'_{\alpha_0,0} < s'_{\alpha,0}$ it follows that the $\overline{T}_\beta$-orbit closure of $(s'_{\alpha,0})_\beta$ is also contained in $[1-\frac{1}{\beta},1]$, which contradicts the uniqueness in Proposition \ref{uniTb}.
\end{proof}

Other properties of the function $\Xi$ may be pursued further in subsequent works. See, e.g., \cite{Kw-twosing} for its analytical point of view.

\section{Diameters of $\overline{T}_\beta$-orbits.}

In this section, we return to the original motivation.

Let $\beta>1$. For $\xi\in[0,1]$, we set
$$\mathrm{Diam}_\beta(\xi):= \sup_{n\geq0}\overline{T}_\beta^n(\xi) -\inf_{n\geq0}\overline{T}_\beta^n(\xi).$$

\begin{thm} \label{MainMotiv}
Let $\beta>1$ and $\xi\in[0,1]$. Suppose $\mathrm{Diam}_\beta(\xi)\leq 1/\beta$. Then $\xi$ satisfies one of the following.
\renewcommand{\theenumi}{\alph{enumi}}
\renewcommand{\labelenumi}{{\rm(\theenumi)}}
\begin{enumerate}
  \item $\overline{d}_\beta(\xi)$ is a Sturmian word of slope less than or equal to $\mathrm{Freq}(\beta)$, and $\mathrm{Diam}_\beta(\xi)=1/\beta$. \label{MainMotiv-1}

  \item For some rational $p/q\leq \mathrm{Freq}(\beta)$, $\overline{d}_\beta(\xi)$ is a mechanical word of slope $p/q$, and $\mathrm{Diam}_\beta(\xi)=\frac{\beta^{q-1}-1}{\beta^q-1}$. \label{MainMotiv-2}

  \item For some rational $p/q\leq \mathrm{Freq}(\beta)$, $\overline{d}_\beta(\xi)$ is a skew word of slope $p/q$, and $\mathrm{Diam}_\beta(\overline{T}_\beta^k(\xi))=\frac{\beta^{q-1}-1}{\beta^q-1}$ for all $k$ sufficiently large. \label{MainMotiv-3}
\end{enumerate}

\end{thm}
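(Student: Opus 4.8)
The plan is to read the trichotomy off the balancedness forced by Lemma~\ref{OrbMech}, and then, in each branch, to locate the two extreme points of the $\overline{T}_\beta$-orbit of $\xi$ and compute their difference. First I would note that $\mathrm{Diam}_\beta(\xi)\le 1/\beta$ says the orbit $\{\overline{T}_\beta^n(\xi)\}_{n\ge0}$ lies in some closed interval of length $\le 1/\beta$; since the orbit lies in $[0,1]$, we may enlarge and translate this interval to one of the form $[t,t+\tfrac1\beta]\subseteq[0,1]$ still containing the orbit (take $[\inf_n\overline{T}_\beta^n(\xi),\ \inf_n\overline{T}_\beta^n(\xi)+\tfrac1\beta]$, or $[1-\tfrac1\beta,1]$ if that overshoots). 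Lemma~\ref{OrbMech} then tells us that $r:=\overline{d}_\beta(\xi)$ is a mechanical word or a skew word, hence balanced with $\mathrm{alph}(r)\subseteq\{a,a+1\}$ for some integer $a\ge0$; writing $b=a+1$ we obtain the trichotomy: (A) $r$ is mechanical of irrational slope; (B) $r$ is mechanical of rational slope; (C) $r$ is skew. The remaining work, in each case, is to pin down the slope bound and the exact diameter.

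\emph{Case (A).} If $r$ is mechanical of irrational slope $\alpha$, then $\mathrm{alph}(r)=\{a,a+1\}$ forces $a<\alpha<a+1$, i.e.\ $\lceil\alpha\rceil=b$, so $r$ is Sturmian and we are in alternative (a). Its shift-orbit closure is the minimal set of all mechanical words of slope $\alpha$ (Proposition~\ref{LexMech}(a)), which is $\overline{T}_\beta$-invariant, is contained in $[t,t+\tfrac1\beta]$, and has frequency $\alpha$; hence $\Xi(\alpha,\beta)$ is defined (this forces $\Delta(\alpha)\le\beta$ by Proposition~\ref{diam+rot}), and Theorem~\ref{DomainCond}(a), with ``best possible'', yields $\alpha\le\mathrm{Freq}(\beta)$. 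For the diameter, minimality makes the $\overline{T}_\beta$-orbit of $\xi$ dense in its closure, and since $(\cdot)_\beta$ is order-preserving and continuous, Proposition~\ref{LexMech}(c) gives $\sup_n\overline{T}_\beta^n(\xi)=(s'_{\alpha,0})_\beta$ and $\inf_n\overline{T}_\beta^n(\xi)=(s_{\alpha,0})_\beta$; then \eqref{Sturm-char} ($s'_{\alpha,0}=bc_\alpha$, $s_{\alpha,0}=ac_\alpha$) gives $\mathrm{Diam}_\beta(\xi)=(b-a)/\beta=1/\beta$.

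\emph{Cases (B) and (C).} If $r$ is mechanical of rational slope $p/q$ (lowest terms), it is purely periodic of period $q$, and its shift orbit --- a finite, $\overline{T}_\beta$-invariant set --- coincides with that of $s_{p/q,0}$ (Proposition~\ref{LexMech}(b)), so as in Case~(A) we get $p/q\le\mathrm{Freq}(\beta)$. The greatest and least elements of this finite orbit are $s'_{p/q,0}=(t'_{p,q})^\omega$ and $s_{p/q,0}=(t_{p,q})^\omega$ (Proposition~\ref{LexMech}(c)); using \eqref{Christ-cent}, $t_{p,q}=az_{p,q}b$, $t'_{p,q}=bz_{p,q}a$, the two period words differ only in their first letter ($b$ versus $a$) and last letter ($a$ versus $b$), so writing each periodic value over $\beta^q-1$,
$$\mathrm{Diam}_\beta(\xi)=\bigl((t'_{p,q})^\omega\bigr)_\beta-\bigl((t_{p,q})^\omega\bigr)_\beta=\frac{(b-a)(\beta^{q-1}-1)}{\beta^q-1}=\frac{\beta^{q-1}-1}{\beta^q-1},$$
which is alternative (b) (the degenerate case $q=1$, $r=p^\omega$ constant, has both sides $0$). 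Finally, if $r$ is skew, the Morse--Hedlund description recalled earlier gives $k_0$ such that $\sigma^k(r)$ is purely periodic mechanical of a fixed rational slope $p/q$ for all $k\ge k_0$; since $\overline{d}_\beta(\overline{T}_\beta^k(\xi))=\sigma^k(r)$ and the orbit of $\overline{T}_\beta^k(\xi)$ sits inside that of $\xi$, Case~(B) applies verbatim to each $\overline{T}_\beta^k(\xi)$, $k\ge k_0$, giving $p/q\le\mathrm{Freq}(\beta)$ and $\mathrm{Diam}_\beta(\overline{T}_\beta^k(\xi))=\frac{\beta^{q-1}-1}{\beta^q-1}$; this is alternative (c), and the trichotomy makes the three cases exhaustive.

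\emph{Main obstacle.} The delicate point is not any single computation but keeping two threads straight. First, one must convert ``the orbit lies in an interval of diameter $\le1/\beta$'' into the existence of a genuine \emph{invariant} $\overline{T}_\beta$-orbit closure whose frequency equals the slope, so that the machinery of Section~\ref{SecDomain} (Propositions~\ref{diam+rot}, \ref{uniTb} and Theorem~\ref{DomainCond}) applies for the bound $\le\mathrm{Freq}(\beta)$; for skew words this forces the passage to a tail $\overline{T}_\beta^{k}(\xi)$ and the verification that $\overline{d}_\beta(\overline{T}_\beta^{k}(\xi))=\sigma^{k}(\overline{d}_\beta(\xi))$. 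Second, one must justify that $\sup_n\overline{T}_\beta^n(\xi)$ and $\inf_n\overline{T}_\beta^n(\xi)$ are attained exactly at $(s'_{\alpha,0})_\beta$ and $(s_{\alpha,0})_\beta$ --- this uses minimality of the ambient Sturmian (or periodic) subshift together with the order-continuity of $(\cdot)_\beta$ --- after which \eqref{Sturm-char} and \eqref{Christ-cent} deliver the diameters with no further effort.
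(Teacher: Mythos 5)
Your proposal is correct and follows essentially the same route as the paper's own proof: reduce to Lemma \ref{OrbMech} to get the mechanical/skew trichotomy, read off the extremes of the orbit closure from Proposition \ref{LexMech} together with \eqref{Sturm-char} and \eqref{Christ-cent}, and compute the three diameters. You merely spell out two steps the paper leaves implicit (the derivation of the bound $\alpha\le\mathrm{Freq}(\beta)$ via Theorem \ref{DomainCond}, and the identification of $\sup$ and $\inf$ via minimality and order-continuity of $(\cdot)_\beta$), which is a welcome elaboration rather than a divergence.
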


\begin{proof}
If $\mathrm{Diam}_\beta(\xi)\leq 1/\beta$, then Lemma \ref{OrbMech} shows that the $\overline{\beta}$-expansion of $\xi$ is either a mechanical word or a skew word of some slope less than or equal to $\mathrm{Freq}(\beta)$. In the case where $\overline{d}_\beta(\xi)$ is a Sturmian word of an irrational slope $\alpha$ with $a+1=b=\lceil\alpha\rceil$, Proposition \ref{LexMech} tells us that $$\sup_{n\geq0}\overline{T}_\beta^n(\xi)=(s'_{\alpha,0})_\beta=(bc_\alpha)_\beta\ \ \mathrm{and\ \ }
\inf_{n\geq0}\overline{T}_\beta^n(\xi)=(s_{\alpha,0})_\beta=(ac_\alpha)_\beta.$$
So, $\mathrm{Diam}_\beta(\xi)=1/\beta$ follows.
When $\overline{d}_\beta(\xi)$ is a mechanical word of a rational slope $p/q$, we have
\begin{align*}
   \mathrm{Diam}_\beta(\xi) &=((bz_{p,q}a)^\omega)_\beta -((az_{p,q}b)^\omega)_\beta\\
   &= \left(\frac{1}{\beta}-\frac{1}{\beta^q} \right)
   \left(1+\frac{1}{\beta^q}+\frac{1}{\beta^{2q}}+\cdots \right)
   =\frac{\beta^{q-1}-1}{\beta^q-1}.
\end{align*}
If $\overline{d}_\beta(\xi)$ is a skew word of slope $p/q$ with preperiod $m$, then the $\overline{\beta}$-expansion of $\overline{T}_\beta^k(\xi)$ is a mechanical word of the same slope for every $k\geq m$.

\end{proof}

Now Theorem \ref{BDirr} is extended to the case where $\xi$ is a rational number.

\begin{coro} \label{RationalCase}
Let $\beta\geq2$ be an integer and $\xi$ be a rational number. Suppose $\mathrm{Diam}_\beta(\xi)\leq 1/\beta$. Then $\xi$ satisfies one of the following.
\renewcommand{\theenumi}{\alph{enumi}}
\renewcommand{\labelenumi}{{\rm(\theenumi)}}
\begin{enumerate}
  \item The $\overline{\beta}$-expansion of $\{\xi\}$ is a mechanical word $(e_1 \cdots e_q)^\omega$ of some rational slope $p/q\leq \beta-1$, and
      $$\xi= \lfloor\xi\rfloor+ \frac{e_1 \beta^{q-1}+e_2 \beta^{q-2}+\cdots +e_q}{\beta^q -1}.$$

  \item The $\overline{\beta}$-expansion of $\{\xi\}$ is a skew word $e_1 \cdots e_m(e_{m+1}\cdots e_{m+q})^\omega$ of some rational slope $p/q\leq \beta-1$, and
      $$\xi= \lfloor\xi\rfloor+ \frac{e_1 \beta^{m+q-1}+e_2 \beta^{m+q-2}+\cdots +e_{m+q}
                                      -(e_1 \beta^{m-1}+e_2 \beta^{m-2}+\cdots +e_m)}
      {\beta^m(\beta^q -1)}.$$

\end{enumerate}
\end{coro}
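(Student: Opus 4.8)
The plan is to reduce Corollary \ref{RationalCase} to the already-proved Theorem \ref{MainMotiv} by exploiting the fact that, for integer $\beta\geq2$ and irrational $\xi$, one has $T_\beta^n(\xi)=\overline{T}_\beta^n(\xi)=\{\xi\beta^n\}$. The first step is to note that $\mathrm{Diam}_\beta(\xi)$ depends only on $\{\xi\}$, so we may assume $\xi\in[0,1]$ and then restore $\lfloor\xi\rfloor$ at the end; moreover $\mathrm{Diam}_\beta(\xi)=\mathrm{Diam}_\beta(\{\xi\})$. Since $\xi$ is rational, the orbit $(\overline{T}_\beta^n(\{\xi\}))_{n\geq0}$ is eventually periodic, hence $\overline{d}_\beta(\{\xi\})$ is eventually periodic; combined with Lemma \ref{OrbMech} (applicable once we observe that $\mathrm{Diam}_\beta(\xi)\leq1/\beta$ forces the orbit closure into an interval of the stated diameter), the $\overline{\beta}$-expansion of $\{\xi\}$ is a balanced eventually-periodic word, i.e.\ a purely periodic mechanical word of rational slope, or a skew word of rational slope. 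The Sturmian (irrational-slope) alternative of Theorem \ref{MainMotiv}\eqref{MainMotiv-1} is impossible here because a Sturmian word is aperiodic whereas $\overline{d}_\beta(\{\xi\})$ is eventually periodic; this is exactly why only two cases survive in the corollary.

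Next I would pin down the slope bound. Theorem \ref{MainMotiv}\eqref{MainMotiv-2} and \eqref{MainMotiv-3} already state that the rational slope $p/q$ satisfies $p/q\leq\mathrm{Freq}(\beta)$; for integer $\beta$ we have $\overline{d}_\beta(1)=(\beta-1)^\omega$ (the $\overline{\beta}$-expansion of $1$, by Proposition \ref{betabar}\eqref{betabara}, since $d_\beta(1)=\beta$ is finite), which falls into Theorem \ref{MeagerCase}\eqref{MeagerCase1} with $a=b-1=\beta-1$, so $\mathrm{Freq}(\beta)=\beta-1$, recovering the stated $p/q\leq\beta-1$. Then it only remains to translate ``$\overline{d}_\beta(\{\xi\})$ is (eventually) periodic'' into the closed-form expressions for $\xi$. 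This is the routine computation: if $\overline{d}_\beta(\{\xi\})=(e_1\cdots e_q)^\omega$ then $\{\xi\}=((e_1\cdots e_q)^\omega)_\beta=\sum_{j\geq0}\beta^{-jq}(e_1\beta^{-1}+\cdots+e_q\beta^{-q})=\frac{e_1\beta^{q-1}+\cdots+e_q}{\beta^q-1}$, giving case (a); and if $\overline{d}_\beta(\{\xi\})=e_1\cdots e_m(e_{m+1}\cdots e_{m+q})^\omega$ then $\{\xi\}=(e_1\cdots e_m)_\beta+\beta^{-m}\frac{e_{m+1}\beta^{q-1}+\cdots+e_{m+q}}{\beta^q-1}$, and clearing denominators/telescoping yields the numerator $e_1\beta^{m+q-1}+\cdots+e_{m+q}-(e_1\beta^{m-1}+\cdots+e_m)$ over $\beta^m(\beta^q-1)$, giving case (b). Adding $\lfloor\xi\rfloor$ finishes both.

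One subtlety worth spelling out: the statement claims the $\overline{\beta}$-expansion of $\{\xi\}$ is \emph{mechanical} (resp.\ skew), not merely balanced and eventually periodic — but by the Morse--Hedlund dichotomy quoted before Proposition \ref{unbalance}, an infinite balanced word is exactly a mechanical word or a skew word, and an eventually periodic balanced word cannot be an aperiodic mechanical word (of irrational slope), so it is either purely periodic mechanical of rational slope or skew of rational slope; that is precisely the two cases. I should also record why the diameter hypothesis transfers: $\mathrm{Diam}_\beta(\xi)\leq1/\beta$ says $\sup_n\overline{T}_\beta^n(\xi)-\inf_n\overline{T}_\beta^n(\xi)\leq1/\beta$, so the orbit closure lies in the closed interval $[\inf_n\overline{T}_\beta^n(\xi),\,\inf_n\overline{T}_\beta^n(\xi)+1/\beta]$, which is the hypothesis of Lemma \ref{OrbMech}; note we do not need invariance of the orbit closure here, so the skew alternative is genuinely available.

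The main obstacle is essentially bookkeeping rather than mathematics: one must be careful that in case (b) the preperiodic prefix $e_1\cdots e_m$ is taken with $m$ minimal (so that $e_m\neq e_{m+q}$ — otherwise the preperiod could be shortened), and that the telescoping of the finite-plus-periodic expansion produces exactly the displayed antisymmetric numerator; there is also a minor check that for integer $\beta$ there is no clash between $\overline{d}_\beta$ and $d_\beta$ causing a non-uniqueness in the digit string, which is handled by working throughout with $\overline{d}_\beta$ as in the rest of the paper. Everything else is immediate from Theorem \ref{MainMotiv} once the periodicity of the expansion of a rational is observed.
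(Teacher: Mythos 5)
Your proposal is correct, but the one step that actually requires an argument --- ruling out the Sturmian alternative \eqref{MainMotiv-1} of Theorem \ref{MainMotiv} --- is done by a genuinely different route from the paper. You observe that for integer $\beta$ the $\overline{T}_\beta$-orbit of a rational $\{\xi\}$ lives in the finite set of rationals in $(0,1]$ whose denominator divides that of $\xi$, hence is eventually periodic, hence $\overline{d}_\beta(\{\xi\})$ is eventually periodic and cannot be a Sturmian word, since Sturmian words are aperiodic. The paper instead excludes case \eqref{MainMotiv-1} by citing the transcendence results of Ferenczi--Mauduit \cite{FM} or Adamczewski--Bugeaud \cite{AB}: if $\overline{d}_\beta(\xi)$ were Sturmian, then $\xi=(s)_\beta$ would be transcendental, contradicting rationality. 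Your argument is elementary and self-contained, and it works precisely because $\beta$ is an integer (for non-integer $\beta$ a rational need not have an eventually periodic expansion, which is presumably why the author reached for the heavier tool); the paper's proof is a one-line citation that also records the stronger fact that a Sturmian digit string in an integer base never represents an algebraic number. The remaining ingredients of your write-up --- $\mathrm{Freq}(\beta)=\beta-1$, which the paper simply takes as the definition for integer $\beta$, and the geometric-series evaluation giving the two displayed formulas --- are exactly the parts the paper treats as routine and omits, and your bookkeeping there is correct.
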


\begin{proof}
All we have to do is to exclude the case \eqref{MainMotiv-1} of Theorem \ref{MainMotiv}.
Suppose that $\overline{d}_\beta(\xi)$ is a Sturmian word $s$, in other words,
$\xi=(s)_\beta$. Then any of \cite{FM} or \cite{AB} proves that $\xi$ is a transcendental number.
\end{proof}

\bigskip
%
%
%
%       ACKNOWLEDGEMENT
%
%
%

{\bf Acknowledgments.}
\medskip

This research was supported by Basic Science Research Program through the National Research Foundation of Korea(NRF) funded by the Ministry of Education, Science and Technology (2011-0004055).

%
%
%
%       R  E  F  E  R  E  N  C  E  S
%
%
%

\noindent Department of Mathematics,\\
Chonnam National University,\\
Gwangju 61186, Republic of Korea\\
E-mail: \textsf{doyong@jnu.ac.kr}

\end{document}